\newcommand{\eps}{\varepsilon}
\newcommand{\mc}[1]{\mathcal{#1}}
\renewcommand{\v}{\textup{\textsf{v}}}
\newcommand{\e}{\textup{\textsf{e}}}
\renewcommand{\d}{\textup{\textsf{d}}}
\theoremstyle{plain}
\newtheorem{thm}{Theorem}[section]
\newtheorem{lem}[thm]{Lemma}
\newtheorem{cor}[thm]{Corollary}
\newtheorem{conj}[thm]{Conjecture}
\newtheorem{que}[thm]{Question}
\noindent \emph{Proof.} {}{#1}{}}{\hfill
\theoremstyle{plain} % just in case the style had changed
\newcommand{\thistheoremname}{}
\newtheorem{genericthm}[section]{\thistheoremname}
\theoremstyle{definition}
\title{Connectivity and choosability of graphs with no $K_t$ minor}
\author{Sergey Norin\thanks{Department of Mathematics and Statistics, McGill University. Email: {\tt sergey.norin@mcgill.ca}. Supported by an NSERC Discovery grant.}
\and 
 Luke Postle\thanks{Department of Combinatorics and Optimization, University of Waterloo, Waterloo, Ontario, Canada. Email: {\tt lpostle@uwaterloo.ca}. Canada Research Chair in Graph Theory. Partially supported by NSERC under Discovery Grant No. 2019-04304, the Ontario Early Researcher Awards program and the Canada Research Chairs program.}
}
\begin{document}

\maketitle

\begin{center}
	\emph{Dedicated to the memory of Robin Thomas}
\end{center}

\begin{abstract}

In 1943, Hadwiger conjectured that every graph with no $K_t$ minor is $(t-1)$-colorable for every $t\ge 1$. While Hadwiger's conjecture does not hold for list-coloring, the linear weakening is conjectured to be true. In the 1980s, Kostochka and Thomason independently proved that every graph with no $K_t$ minor has average degree $O(t\sqrt{\log t})$ and thus is $O(t\sqrt{\log t})$-list-colorable. 

Recently, the authors and Song proved that every graph with no $K_t$ minor is $O(t(\log t)^{\beta})$-colorable for every $\beta > \frac 1 4$. Here, we build on that result to show that every graph with no $K_t$ minor is $O(t(\log t)^{\beta})$-list-colorable for every $\beta > \frac 1 4$.

Our main new tool is an  upper bound on the number of vertices in highly connected $K_t$-minor-free graphs: We prove that for every $\beta > \frac 1 4$, every $\Omega(t(\log t)^{\beta})$-connected graph with no $K_t$ minor has  $O(t (\log t)^{7/4})$ vertices.
\end{abstract}

\section{Introduction}

All graphs in this paper are finite and simple. Given graphs $H$ and $G$, we say that $G$ has \emph{an $H$ minor} if a graph isomorphic to $H$ can be obtained from a subgraph of $G$ by contracting edges. We denote the complete graph on $t$ vertices by $K_t$.

In 1943 Hadwiger made the following famous conjecture.

\begin{conj}[Hadwiger's conjecture~\cite{Had43}]\label{Hadwiger} For every integer $t \geq 0$, every graph with no $K_{t+1}$ minor is $t$-colorable. 
\end{conj}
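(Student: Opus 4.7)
The plan is to attempt induction on $t$, with the hypothesis that every graph with no $K_t$ minor is $(t-1)$-colorable. The base cases $t \in \{0,1,2,3\}$ are trivial or classical (e.g.\ $K_4$-minor-free graphs are series-parallel, hence $3$-colorable). The case $t=4$ reduces to the Four Color Theorem via Wagner's $1937$ structure theorem for $K_5$-minor-free graphs, and $t=5$ was settled by Robertson, Seymour, and Thomas, again by a reduction to the Four Color Theorem after a delicate structural analysis of apex and nonplanar pieces.

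For the inductive step I would take a minimum counterexample $G$, minimizing $|V(G)|+|E(G)|$ among graphs with no $K_{t+1}$ minor that are not $t$-colorable. Standard reductions force $G$ to be $(t+1)$-contraction-critical: deleting or contracting any edge yields either a $t$-colorable graph or one containing a $K_{t+1}$ minor. This in turn forces $\delta(G)\ge t$ and, by a cut-pasting argument that permutes colors across small separators, forces $G$ to be $t$-connected. The goal would then be to locate a vertex $v$ such that either $N(v)$ spans fewer than $t$ colors in some $t$-coloring of $G-v$, or such that the highly connected structure of $G$ near $v$ permits one to build a $K_{t+1}$ minor by contracting a carefully chosen connected subgraph of $G[N[v]\cup R]$ for some small $R$. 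The structural bound on highly connected $K_{t+1}$-minor-free graphs from the paper would be one of the main inputs here.

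The central obstacle is the extremal degree bound: Kostochka and Thomason give only $\delta(G)\le c\, t\sqrt{\log t}$ in any $K_{t+1}$-minor-free graph, which falls short by a factor of $\Theta(\sqrt{\log t})$ of what is needed to apply a greedy argument. One would need a fundamentally new structural input: either a mechanism for iteratively reducing to a graph with a vertex of degree essentially $t-1$, or a Kempe-chain / recoloring argument specifically adapted to minor-closed classes, neither of which is presently known. The weaker $O(t(\log t)^\beta)$-coloring bound of Norin--Postle--Song (and the list-coloring extension proved in this paper) already represents the state of the art, and even the very modest target of $(t+o(t))$-colorability is open.

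In full honesty, my plan would therefore produce at best the $O(t(\log t)^\beta)$-colorability bound that is the paper's main theorem, rather than Hadwiger's $(t-1)$-colorability conjecture itself. The exact bound is open for every $t\ge 7$, so the ``main obstacle'' is not a step in a concrete plan but the absence of any known method to bridge the polylogarithmic gap; what I would actually write up, following this paper's strategy, is the weaker asymptotic statement rather than the conjecture as stated.
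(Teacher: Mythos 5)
This statement is Hadwiger's conjecture itself, which the paper states as an open conjecture and does not prove; no proof exists in the literature for any $t\ge 6$. Your proposal is therefore not a proof and cannot be compared against a proof in the paper. To your credit, you recognize this explicitly: your sketch correctly recounts the known cases ($t\le 5$, with $t=4$ via Wagner's theorem and $t=5$ via Robertson--Seymour--Thomas, both resting on the Four Color Theorem), correctly identifies the degree/connectivity bottleneck in the minimum-counterexample approach (Kostochka--Thomason only gives minimum degree $O(t\sqrt{\log t})$, which is too weak for a greedy or Kempe-chain argument), and correctly concludes that what can actually be established by the paper's methods is the $O(t(\log t)^{\beta})$ bound of Theorem~\ref{t:ordinaryHadwiger} and its list-coloring analogue Theorem~\ref{t:main}, not $(t-1)$-colorability. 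The genuine gap is the one you name yourself: there is no known mechanism to close the polylogarithmic factor, and your inductive framework stalls exactly where every known attempt stalls, at the step requiring a vertex whose neighborhood can be recolored or contracted into a $K_{t+1}$ minor. In short, your write-up is an accurate status report on an open problem rather than a proof attempt with a fixable flaw.
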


Hadwiger's conjecture is widely considered among the most important problems in graph theory and has motivated numerous developments in graph coloring and graph minor theory. For an overview of major progress we refer the reader to~\cite{NPS19}, and to the recent survey by Seymour~\cite{Sey16Survey} for further background.

The following natural weakening of Hadwiger's conjecture has been considered by several researchers.

\begin{conj}[Linear Hadwiger's conjecture~\cite{ReeSey98,Kaw07, KawMoh06}]\label{c:LinHadwiger} There exists $C>0$ such that for every integer $t \geq 1$, every graph with no $K_{t}$ minor is $Ct$-colorable. 
\end{conj}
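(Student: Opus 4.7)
The plan is to approach \Cref{c:LinHadwiger} via the degeneracy route. Since any graph of degeneracy $d$ is trivially $(d+1)$-colorable and $(d+1)$-list-colorable, it suffices to prove the stronger extremal statement that every $K_t$-minor-free graph has average degree $O(t)$. This reformulation converts the coloring problem into a minor-density problem, and the two are equivalent up to constants.

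First, I would take a minimum counterexample $G$ on the fewest vertices. By a standard minimality/edge-cut argument, $G$ must be roughly $(Ct)$-connected and have minimum degree at least $Ct$; moreover, contracting any edge of $G$ should create a $K_t$ minor, so $G$ is saturated with almost-$K_t$ minors.

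Second, I would apply the main auxiliary result advertised in the abstract, namely that every sufficiently highly connected $K_t$-minor-free graph has only $O(t(\log t)^{7/4})$ vertices. Combined with the first step this produces a dense, small, highly connected $K_t$-minor-free graph $H$ with $|V(H)| = O(t \cdot \operatorname{polylog}(t))$ and minimum degree $\Omega(t)$. The entire problem then reduces to showing that such an $H$ cannot exist, i.e.\ to finding a $K_t$ minor inside a sufficiently dense, sufficiently connected, sufficiently small graph.

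Third, and this is where the real work lies, I would attempt to construct the $K_t$ minor in $H$ directly. The natural approach is to fix a maximal collection of pairwise touching connected branch sets and use the high connectivity of $H$ (via many disjoint paths and path rotations) to grow the collection to size $t$. The main obstacle will be precisely this rotation/absorption step: the Kostochka--Thomason argument loses a factor of $\sqrt{\log t}$ here, while the authors' earlier work with Song loses $(\log t)^{1/4}$, and the present paper still pays $(\log t)^{7/4}$ in the vertex bound. Closing the gap from $O(t(\log t)^{7/4})$ down to $O(t)$ vertices in the connectivity bound, or equivalently removing the polylogarithmic slack in the $K_t$-extraction, is the fundamental obstruction separating the current toolkit from the full linear conjecture, and a genuinely new idea — rather than a sharper pigeonhole in the existing framework — seems to be required.
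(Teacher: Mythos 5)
This statement is a \emph{conjecture}; the paper does not prove it, and indeed \cref{c:LinHadwiger} remains open, so there is no ``paper's own proof'' to compare against. Your proposal also does not prove it, and you acknowledge this at the end, but there are two substantive problems with the framing that are worth flagging.

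First, the degeneracy reduction in your opening paragraph is a dead end, not a reformulation. You propose to show that every $K_t$-minor-free graph has average degree $O(t)$ and call this ``equivalent up to constants'' to the coloring problem. Neither half of that is right. The extremal density of $K_t$-minor-free graphs is $\Theta(t\sqrt{\log t})$ --- the upper bound is Kostochka--Thomason (\cref{t:KT}, \cref{t:density}), and the matching lower bound comes from random graphs (this is precisely Thomason's sharp constant $\lambda$ quoted before \cref{c:random}). So there \emph{do exist} $K_t$-minor-free graphs of average degree $\Omega(t\sqrt{\log t})$, and any purely degeneracy-based argument is stuck at $t\sqrt{\log t}$ colors. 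And bounded degeneracy is sufficient but far from necessary for bounded chromatic number, so the two problems are not equivalent. The entire point of~\cite{NPS19} and of the present paper is to escape the degeneracy barrier: \cref{t:ordinaryHadwiger} and \cref{t:main} get below $t\sqrt{\log t}$ exactly \emph{because} they do not proceed via a minimum-degree vertex in an arbitrary subgraph, but instead combine the small-vertex-count bound for highly connected graphs (\cref{t:connect}) with the Hall-ratio coloring bound (\cref{t:listHall}, \cref{c:listsmall}).

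Second, your steps two and three correctly summarize the machinery of the paper and correctly identify that the remaining polylogarithmic slack in \cref{t:connect} is where the loss occurs, but the conclusion you draw is slightly off. The obstruction is not to improve the vertex bound from $t(\log t)^{7/4}$ to $O(t)$ at a fixed connectivity; it is to lower the \emph{connectivity threshold} in \cref{t:connect} from $\Omega(t(\log t)^{1/4})$ toward $O(t)$ while keeping $\v(G) = t\cdot\mathrm{polylog}(t)$, which is exactly the open question the authors pose in \cref{s:remarks}. Once the connectivity threshold is $O(t)$, \cref{c:listsmall} finishes with $O(t\,\mathrm{polyloglog}(t))$ colors regardless of the exact polylogarithmic vertex count. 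So the right target is the exponent $\beta$ in the hypothesis, not the exponent in the conclusion.
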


For many decades, the best general bound on the number of colors needed to properly color every  graph with no $K_t$ minor has been $O(t\sqrt{\log{t}})$, a result obtained independently by Kostochka~\cite{Kostochka82,Kostochka84} and Thomason~\cite{Thomason84} in the 1980s. The results of \cite{Kostochka82,Kostochka84,Thomason84}  bound the ``degeneracy" of graphs with no $K_t$ minor.
Recall that a graph $G$ is \emph{$d$-degenerate} if every non-null subgraph of $G$ contains a vertex of degree at most $d$. A standard inductive argument shows that every $d$-degenerate graph is $(d+1)$-colorable. Thus the following bound on the 
degeneracy of graphs with no $K_t$ minor gives a corresponding bound on their chromatic number and even their list chromatic number. 

\begin{thm}[\cite{Kostochka82,Kostochka84,Thomason84}]\label{t:KT} Every graph with no $K_t$ minor is $O(t\sqrt{\log{t}})$-degenerate.
\end{thm}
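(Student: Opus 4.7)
The plan is to prove the contrapositive: there exists an absolute constant $C$ such that every graph with average degree at least $d := Ct\sqrt{\log t}$ contains a $K_t$ minor. The degeneracy bound follows immediately, since a graph in which no non-null subgraph has average degree $\geq d$ admits a vertex ordering of back-degree $<d$, hence is $(d-1)$-degenerate and therefore $d$-list-colorable by a standard greedy argument.

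The main step is a dense-minor extraction. Starting from $G$ I would first pass to a subgraph of minimum degree $\geq d/2$ by iterated deletion of low-degree vertices. Then I would produce a minor $H$ with $|V(H)| = m \leq (1 + o(1))\,d$ and edge-density $|E(H)|/\binom{m}{2} \geq 1 - \eps$ for a suitably small $\eps = \eps(d)$. The construction is iterative: while some edge $uv$ in the current minor has small co-degree $|N(u) \cap N(v)|$, contract $uv$ — this decreases the vertex count by one while approximately preserving the min-degree condition. When no such edge remains, every edge has large co-degree, and a double-counting argument on (edge, common neighbour) incidences both bounds $m$ from above and forces the density to approach $1$.

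Given such an $H$, I would build the $K_t$ minor by randomly partitioning $V(H)$ into $t$ bags of size roughly $m/t \approx \sqrt{\log t}$. With positive probability every bag induces a connected subgraph of $H$ — because the non-edges of $H$ form a very sparse graph, so a random set of $\Theta(\sqrt{\log t})$ vertices is almost surely connected within $H$ — and every pair of bags is joined by at least one edge. A union bound over the $\binom{t}{2}$ cross-edge conditions and $t$ connectivity conditions closes the argument, provided $\eps$ is smaller than an appropriate negative power of $t$; the few residual failures can be repaired by rerouting through common neighbours.

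The main obstacle is calibrating the dense-minor step to achieve $\eps$ of order $t^{-\Omega(1)}$ while keeping $m = O(d)$; this density-versus-size tradeoff is precisely where the $\sqrt{\log t}$ factor in the hypothesis is forced. Indeed, bags of size $\Theta(\sqrt{\log t})$ demand an inverse-polynomial non-edge rate to survive the union bound, and tracking how the co-degree condition propagates through the contraction process without degrading more than constants is the technical heart of the Kostochka and Thomason arguments.
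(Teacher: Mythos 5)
The theorem you are asked to prove is one the paper cites from Kostochka and Thomason but does not reprove, so there is no in-paper proof to compare against directly. The closest internal analogue is Lemma~\ref{l:dense}, which is the ``find a $K_t$ minor inside a graph of very low non-edge density'' step of the classical argument; your second step is a looser version of exactly that. At the level of strategy your plan is the standard Thomason-style route (pass to high minimum degree, extract a small dense minor, partition into branch sets and check adjacency and connectivity by a union bound), so the approach is right. However there are two substantive inaccuracies worth flagging.

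First, the calibration of $\eps$ is off. You write that bags of size $\Theta(\sqrt{\log t})$ ``demand an inverse-polynomial non-edge rate.'' They do not: with bag size $k = \Theta(\sqrt{\log t})$ the union bound over $\binom{t}{2}$ pairs needs $q^{k^2} < t^{-2}$, i.e.\ $k^2\log(1/q) > 2\log t$, which is satisfied by any \emph{constant} $q$ bounded appropriately below $1$. This matches the hypothesis $6t(70q)^{l^2}\le 1$ of Lemma~\ref{l:dense}: with $l \approx n/(9t) = \Theta(\sqrt{\log t})$ that inequality reduces to $q$ being a small absolute constant. The inverse-polynomial requirement you describe is the regime $k = o(\sqrt{\log t})$, which you are not in. This is not fatal, but it means the ``density-versus-size tradeoff'' is gentler than you suggest and should not be where the technical heart is.

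Second, two steps are glossed in a way that hides real difficulties. (a) Contracting a low-codegree edge does \emph{not} ``approximately preserve the min-degree condition'': the contracted vertex has high degree, but every common neighbour of $u$ and $v$ loses a degree, so minimum degree can drop. The quantity that is actually controlled in the Kostochka/Thomason/Mader-style arguments is density (or a potential such as $\e - c\cdot\v$), after which one re-extracts a min-degree subgraph; you need this two-step loop, not a one-shot min-degree invariant. (b) Requiring each random bag of size $\Theta(\sqrt{\log t})$ to be connected is not something you can afford by a naive union bound over $t$ bags when $q$ is only a constant: the concentration is not strong enough. This is precisely why Lemma~\ref{l:dense} does \emph{not} insist the branch sets be connected a priori; it chooses the cores $X_i\cup Y_i$ inside a pruned vertex set $Z$ with few non-neighbours and then \emph{greedily extends} each core through abundant common neighbours outside $Z$ to make it connected. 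What you describe as ``repairing a few residual failures by rerouting'' is in fact the mechanism, not a patch, and it needs to be built into the construction (with the accompanying observation that any two non-adjacent vertices of $Z$ have $\gg |Z|$ common neighbours outside $Z$).
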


Very recently,  authors and Song~\cite{NPS19} improved the bound implied by \cref{t:KT} with the following theorem.

\begin{thm}[\cite{NPS19}]\label{t:ordinaryHadwiger}
For every $\beta > \frac 1 4$, every graph with no $K_t$ minor is $O(t (\log t)^{\beta})$-colorable.
\end{thm}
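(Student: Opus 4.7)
The plan is to prove the theorem via a minimum counterexample argument combined with a density-amplification scheme that iteratively refines the bound of \cref{t:KT}. Let $k = \lceil C t (\log t)^\beta \rceil$ for a sufficiently large constant $C$, and suppose for contradiction that $G$ is a $K_t$-minor-free graph with $\chi(G) > k$ and $|V(G)|$ minimum. Minimality forces $G$ to be $(k+1)$-critical, so $\delta(G) \geq k = \Omega(t(\log t)^\beta)$. Because $\beta < 1/2$, this lower bound on $\delta(G)$ is weaker than the degeneracy guaranteed by \cref{t:KT}, so that theorem alone does not yield a contradiction, and one must extract further structure from $G$.

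First I would reduce to a highly connected setting. A $(k+1)$-critical graph has no clique cutset, and a careful separator-reduction, using a clique-sum-style argument that preserves $K_t$-minor-freeness when the separator is turned into a clique, shows we may assume $G$ is $\Omega(t(\log t)^\beta)$-connected. The target is then to construct a $K_t$ minor directly, namely $t$ pairwise adjacent connected branch sets; a naive greedy construction using only the average degree would merely replicate the Kostochka--Thomason calculation and gain nothing, so the improvement must exploit the high connectivity together with local structure.

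The crucial new ingredient would be a density-amplification lemma of the following shape: any $K_t$-minor-free graph with average degree $d \gg t\sqrt{\log t}$ contains, inside a set of $o(|V(G)|)$ vertices, a subgraph whose normalized density is at least $d \cdot (\log t)^\gamma$ for some absolute $\gamma > 0$. I would prove this by using the connectivity to route many internally disjoint short paths between a carefully chosen collection of hub vertices and then contracting those paths to concentrate density in a smaller induced subgraph. Iterating this amplification $O(\log \log t)$ times boosts the density from the Kostochka--Thomason threshold up to a regime in which $t$ pairwise adjacent branch sets can be built directly, producing a $K_t$ minor and a contradiction.

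The hard part will be calibrating the amplification step so that the vertex loss at each iteration precisely balances the density gain: this trade-off dictates the final exponent, and $\beta > 1/4$ emerges from optimizing it. Reaching $\beta = 1/4$ exactly, let alone $\beta = 0$ as demanded by \cref{c:LinHadwiger}, appears to require genuinely new ideas, since the $1/4$ threshold looks like a fundamental barrier for any argument of this iterative amplification form.
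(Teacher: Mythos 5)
First, a bookkeeping point: the statement you are proving is \cref{t:ordinaryHadwiger}, which this paper cites from~\cite{NPS19} rather than reproving; however, the two engine lemmas of the~\cite{NPS19} argument are stated here as \cref{t:newforced} and \cref{t:minorfrompieces}, so the comparison is still meaningful. Your outer scaffolding (minimum counterexample, criticality giving $\delta(G) \ge k$, and a reduction to a highly connected $K_t$-minor-free graph) does match the real approach. The central engine you propose does not, and I believe it cannot work as described. Your ``density-amplification lemma'' asserts that a $K_t$-minor-free graph with average degree $d \gg t\sqrt{\log t}$ contains (after contracting paths, hence as a minor) a small piece of density $d(\log t)^\gamma$, to be iterated $O(\log\log t)$ times. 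But any minor of a $K_t$-minor-free graph is again $K_t$-minor-free, and by \cref{t:density} such a graph has average degree at most $6.4\,t\sqrt{\log t}$. So the hypothesis $d \gg t\sqrt{\log t}$ is already vacuous, the conclusion can never exceed that ceiling, and the iteration terminates after $O(1)$ steps. The lemma, as stated, carries no content and there is no mechanism by which it could close the gap from $t(\log t)^\beta$ to a $K_t$ minor on its own.

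The actual mechanism in~\cite{NPS19} differs in two ways that your proposal misses. First, \cref{t:newforced} is a density \emph{concentration} statement, not an amplification: from a graph of density $\d(G)$ it produces a much smaller subgraph whose density is essentially the same (it may even lose a factor $s^\delta C$), never larger. Second, and this is the idea absent from your plan, one does not build a single very dense object. One instead extracts from $G$ \emph{many pairwise disjoint} small subgraphs $H_1,\dots,H_r$ each of density roughly $t(\log t)^\beta$, by repeatedly applying \cref{t:newforced} to the graph left after deleting the previously found pieces, and then \cref{t:minorfrompieces} converts high connectivity plus $r \gtrsim (\log t)^{1-2\beta}$ such disjoint pieces into a $K_t$ minor. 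Each $H_i$ on its own only yields a $K_{t'}$ minor for $t' \ll t$; it is the aggregation of sub-clique-minors across the $H_i$'s, linked through $G$ by the assumed connectivity, that produces $K_t$. This linking lemma is exactly where the exponent $\beta > 1/4$ comes from, not from calibrating an amplification cascade. Your instinct that $1/4$ is a genuine barrier for arguments of this general type is shared by the authors (see \cref{s:remarks}), but the barrier lives in \cref{t:minorfrompieces}, not in an iterated local amplification.
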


In \cite{NorSong19Odd} Song and the first author extended \cref{t:ordinaryHadwiger} to odd minors. 

In this paper we extend  \cref{t:ordinaryHadwiger} in a different direction -- to list coloring. Let  $\{L(v)\}_{v \in V(G)}$ be an assignment of lists of colors to  vertices of a graph $G$. We say that $G$ is \emph{$L$-list colorable} if there is a choice of colors $\{c(v)\}_{v \in V(G)}$ such that $c(v) \in L(v)$, and $c(v) \neq c(u)$ for every $uv \in E(G)$. 
A graph $G$ is said to be \emph{$k$-list colorable} if $G$ is $L$-list colorable for every list assignment $\{L(v)\}_{v \in V(G)}$ such that $|L(v)| \geq k$ for every $v \in V(G)$.  
Clearly every $k$-list colorable graph is $k$-colorable, but the converse does not hold. 

Voigt~\cite{Voigt93} has shown that there exist planar graphs which are not $4$-list colorable. Generalizing the result of~\cite{Voigt93}, Bar\'{a}t, Joret and Wood~\cite{BJW11} constructed graphs with no $K_{3t+2}$ minor which are not $4t$-list colorable for every $t \geq 1$. These results leave open the possibility that Linear Hadwiger's Conjecture holds for list coloring, as conjectured by Kawarabayashi and Mohar~\cite{KawMoh07}.

\begin{conj}[\cite{KawMoh07}]\label{c:ListHadwiger} There exists $C>0$ such that for every integer $t \geq 1$, every graph with no $K_{t}$ minor is $Ct$-list colorable. 
\end{conj}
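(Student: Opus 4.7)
The plan is to follow the blueprint behind \cref{t:ordinaryHadwiger} and its list-coloring extension in this paper, but to drive all quantitative bounds down to linear. Formally, I would induct on $|V(G)|$: fix a large constant $C$, assume every proper subgraph with no $K_t$ minor is $Ct$-list colorable, and consider a minimum counterexample $G$ together with an adversarial assignment of lists of size $Ct$. A vertex of degree less than $Ct$ finishes by degeneracy, and a separator of size $o(t)$ lets us split, recurse, and merge the two partial list colorings across the separator by a short matching-type argument. Hence we may assume $G$ is $\Theta(t)$-connected with minimum degree $\Omega(t)$.

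The heart of the high-connectivity case would rest on a linear-in-$t$ strengthening of this paper's main technical lemma: every $\Theta(t)$-connected graph with no $K_t$ minor has $O(t)$ vertices. Granted this, once $|V(G)|=O(t)$ and $G$ is $\Theta(t)$-connected, a counting argument together with Mader-type bounds on small branch sets produces many disjoint connected subgraphs of controlled size, and a pigeonhole on list intersections then either completes the list coloring directly or contracts these branch sets to a $K_t$ minor. The passage from this structural input to list colorability would run through a list-coloring analogue of the density-increment argument of~\cite{NPS19}; the present paper already carries out precisely such an analogue in the $O(t(\log t)^\beta)$ regime, and the same machinery should apply verbatim once the linear vertex bound is in hand.

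The main obstacle, by a wide margin, is that linear vertex bound. The present paper reaches only $O(t(\log t)^{7/4})$ under connectivity $\Omega(t(\log t)^\beta)$ for $\beta > 1/4$, and an $O(t)$ bound would already imply Linear Hadwiger's Conjecture for ordinary coloring. The $(\log t)^{1/4}$-type loss seems intrinsic to the current density-increment argument, which loses a $\sqrt{\log t}$ factor at the Kostochka-Thomason baseline and recovers only a fourth root of it per iteration before the increment stalls. My best guess for breaking this barrier is to replace the random-subgraph averaging at the core of the density increment with an entropy-compression or algorithmic local-lemma style argument that exploits the extra slack offered by list coloring (where we may choose any color from a list) to absorb constant-factor losses which currently compound into the logarithmic factor. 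A complementary line would be to prove a direct structural theorem, in the spirit of Robertson-Seymour decomposition adapted to the minor-extremal regime, that outputs a $K_t$ minor from any $\Theta(t)$-connected $K_t$-minor-free graph on more than $\Theta(t)$ vertices, bypassing the density-increment machinery entirely. Either route would produce a full proof of \cref{c:ListHadwiger}; the first feels closer to the methods of this paper, while the second seems likely to require genuinely new ingredients from graph minor theory.
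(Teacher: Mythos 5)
This statement is \cref{c:ListHadwiger}, an open conjecture due to Kawarabayashi and Mohar; the paper does not prove it and only establishes the weaker \cref{t:main}, so there is no proof here for your proposal to match, and your proposal does not close that gap. The decisive missing ingredient is exactly the one you identify: your argument requires that every $\Theta(t)$-connected graph with no $K_t$ minor has $O(t)$ vertices. Nothing of this strength is known. The best available result is \cref{t:connect}, which needs connectivity $\Omega(t(\log t)^{\beta})$ with $\beta>1/4$ and still only yields $t(\log t)^{3-5\beta+\delta}$ vertices, and \cref{c:random2} shows the vertex count cannot be pushed below $\Omega(t(\log t)^{1-\beta})$ at that connectivity, so even the shape of the bound you need is not attainable by the present machinery. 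Your suggested remedies (entropy compression, an algorithmic local lemma, a structure theorem for the minor-extremal regime) are speculative directions, not arguments; as written, the ``proof'' is a research program conditioned on a lemma that would itself be a major breakthrough (indeed, essentially Linear Hadwiger for highly connected graphs).

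A secondary step would also fail as stated: you propose to handle a separator of size $o(t)$ by splitting, recursing, and ``merging the two partial list colorings across the separator by a short matching-type argument.'' For ordinary coloring one can permute color classes on one side to make the colorings agree on the cutset, but for list coloring this is impossible in general, since the lists differ from vertex to vertex and a color class on one side need not be available on the other. The paper's actual mechanism (\cref{l:contract}, \cref{c:connect}, and the proof of \cref{t:main}) avoids any merging: it colors $G-X$ first, observes that each vertex of the small piece $X$ loses at most $|Y|$ colors to its coboundary $Y$, and then list-colors $G[X]$ from the residual lists using \cref{c:listsmall} --- which is only possible because $|X|$ is quantitatively small, i.e.\ precisely the bound your plan lacks at the linear scale. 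So even granting high connectivity, the reduction step of your outline needs to be replaced by the coboundary argument, and the quantitative engine behind it remains out of reach.
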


\cref{t:KT} implies that every graph with no $K_t$ minor is $O(t\sqrt{\log{t}})$-list colorable, which until now was the best known upper bound for general $t$. 

Our main result extends \cref{t:ordinaryHadwiger} to list colorings.

\begin{restatable}{thm}{Main}\label{t:main} For every $\beta > \frac 1 4$,
every graph with no $K_t$ minor is $O(t (\log t)^{\beta})$-list-colorable.
\end{restatable}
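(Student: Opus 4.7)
The plan is to mirror the proof of \cref{t:ordinaryHadwiger} in \cite{NPS19}, replacing its main structural input by the new connectivity/size theorem announced in the abstract, and converting each appeal to ordinary coloring into a list-coloring analogue. Fix $\beta > 1/4$ and a sufficiently large constant $C$, and set $k := C t (\log t)^\beta$. Suppose for contradiction there is a $K_t$-minor-free graph admitting a list assignment $L$ with $|L(v)| \geq k$ for all $v$ but no proper $L$-coloring, and choose such a pair $(G, L)$ with $|V(G)|$ minimum. Then $G$ is $k$-list-critical, and the usual greedy argument on any vertex of degree less than its list size gives $\delta(G) \geq k$.

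The first substantive step is to upgrade the minimum-degree bound $\delta(G) \geq k$ to a vertex-connectivity bound $\kappa(G) = \Omega(k)$. For ordinary coloring this is the one-line separator argument: color one side by induction and permute colors to match on the cut. For list coloring the analogue is more subtle because color classes cannot be freely permuted, but since each list has size at least $k$ and a would-be cut $S$ violating the desired connectivity bound has size $o(k)$, there is enough slack on $S$ to run a list-critical reduction in the style of Kostochka and Stiebitz: partially color one side, update the lists on $S$, and use minimality on the other side with the updated lists. I expect this to be the trickiest step of the proof of \cref{t:main}, since the upgrade must produce $\kappa(G) = \Omega(k)$ rather than merely $2$-connectivity, and the bookkeeping on the lists must survive the induction.

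Once $\kappa(G) = \Omega(k)$ is established, the new connectivity theorem forces $|V(G)| = O(t(\log t)^{7/4})$. The endgame is to derive a contradiction on this small, dense, highly-connected graph by reusing the minor-construction step from \cite{NPS19}: using the high connectivity together with $\delta(G) \geq k$, one partitions $V(G)$ into $t$ pairwise adjacent connected branch sets, exhibiting a $K_t$ minor in $G$ and contradicting the hypothesis. The main obstacle here is checking that this final step, originally designed for chromatic-critical graphs, transfers to the list-critical setting; the reason to be optimistic is that it ultimately rests only on the properties $\delta(G) \geq k$ and high connectivity of $G$, both of which remain available, so the new connectivity/size theorem is indeed doing the essential new work.
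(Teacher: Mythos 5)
Your proposal diverges from the paper in two essential places, and both divergences correspond to genuine gaps.

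First, you try to upgrade the minimum-degree bound $\delta(G)\geq k$ for a minimal list-critical counterexample directly to a connectivity bound $\kappa(G)=\Omega(k)$. You rightly flag this as the trickiest step, but it is worse than tricky: the paper does not attempt it, and there is no reason to expect a minimal list-critical graph to be $\Omega(k)$-connected. (Even for ordinary criticality, high connectivity is not available; Kostochka--Stiebitz-type results give density/degree information, not connectivity.) The paper instead proves Lemma~\ref{l:contract}, which is a local statement: in any graph of minimum degree at least $6k$, one can find a non-empty set $X$ whose coboundary $Y$ has size at most $3k$ together with a matching $M$ from $Y$ into $X$ so that $G[X\cup Y]/M$ is $k$-connected. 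This is a strictly weaker and much more achievable conclusion --- high connectivity of a contracted \emph{piece} rather than of $G$ --- and it is exactly what lets Theorem~\ref{t:connect} be applied (via Corollary~\ref{c:connect}) to conclude that $|X|\le t(\log t)^{3-5\beta+\delta}$, with the small coboundary $Y$ as the interface to the rest of $G$.

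Second, your endgame is circular. After bounding $\v(G)$ using Theorem~\ref{t:connect}, you propose to ``derive a contradiction by exhibiting a $K_t$ minor.'' But Theorem~\ref{t:connect} already says precisely that a highly connected $K_t$-minor-free graph has few vertices; having few vertices together with high connectivity and high minimum degree does \emph{not} force a $K_t$ minor (the density is only $O(t(\log t)^\beta)$, well below the Kostochka--Thomason threshold when $\beta<\tfrac12$). The paper's actual endgame is to \emph{color} the small piece $G[X]$, not to find a minor in it: since $G[X]$ is $K_t$-minor-free, its Hall ratio is at most $2t$ by Duchet--Meyniel, and Theorem~\ref{t:listHall} (a generalization of Alon's choosability bound for complete multipartite graphs, proved in Section~\ref{s:alon}) gives $\chi_\ell(G[X])=O\bigl(t\log^2(\v(G[X])/t)\bigr)$, which is $O(t\log^2\log t)\le d$ because $|X|$ is only $t\cdot\operatorname{polylog}(t)$. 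Meanwhile the small coboundary guarantees each vertex of $X$ loses at most $d$ colors from its list after coloring $G-X$ by minimality, so $G[X]$ is colorable from the residual lists. Your proposal contains neither Lemma~\ref{l:contract} nor the Hall-ratio/Alon ingredient, and these are the two new tools on which the paper's reduction turns.
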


In the course of proving Theorem~\ref{t:main}, we also prove a remarkably small upper bound on the number of vertices in $K_t$-minor-free graphs with connectivity $O(t(\log t)^\beta)$ for every $\beta > 1/4$ as follows.

\begin{restatable}{thm}{Connect}\label{t:connect} For every $\delta > 0$ and $1/2 \geq  \beta > 1/4$, there exists $C=C_{\ref{t:connect}}(\beta,\delta)>0$ such that if $G$ is $Ct (\log t)^{\beta}$-connected and has no $K_t$ minor then $\v(G) \leq t(\log t)^{3-5\beta + \delta}$.
\end{restatable}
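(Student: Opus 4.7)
I would argue by contradiction: assume $G$ is $Ct(\log t)^\beta$-connected with no $K_t$ minor and $n := \v(G) > t(\log t)^{3-5\beta+\delta}$ for a constant $C=C(\beta,\delta)$ to be chosen large. The overall strategy is to extract a sufficiently dense minor $H$ of $G$ from the connectivity hypothesis, and then contradict Theorem \ref{t:ordinaryHadwiger} (and the Kostochka--Thomason bound from Theorem \ref{t:KT}) applied to $H$.

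First, I would construct $H$ by a greedy ball-contraction. Partition $V(G)$ into connected parts $V_1,\ldots,V_m$ each of size at most $s$ (for an appropriate target $s$), produced by iteratively peeling off BFS balls of small radius from carefully chosen centers; then contract each $V_i$ to obtain a minor $H$ with $m \geq n/s$ vertices. The minor $H$ is still $K_t$-minor-free. Crucially, because $G$ is $Ct(\log t)^\beta$-connected, any part $V_i$ with $V_i \cup N_G(V_i) \neq V(G)$ satisfies $|N_G(V_i)| \geq Ct(\log t)^\beta$, and since every other part has size at most $s$, the contracted vertex representing $V_i$ in $H$ has degree at least $\Omega(t(\log t)^\beta / s)$. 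Hence $H$ is $K_t$-minor-free, dense, and on $n/s$ vertices.

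Second, I would extract the contradiction from the density of $H$. Since $H$ is $K_t$-minor-free, Theorem \ref{t:ordinaryHadwiger} gives $\chi(H) = O(t(\log t)^{\beta'})$ for any $\beta' \in (1/4,\beta)$, whence $H$ has an independent set $I$ of size at least $m/O(t(\log t)^{\beta'})$. Pulling $I$ back to $G$ produces a family of pairwise-non-adjacent parts whose external neighborhoods each have size at least $Ct(\log t)^\beta$; summing these contributions gives far more than the $O(nt\sqrt{\log t})$ edges permitted by Theorem \ref{t:KT} for a $K_t$-minor-free graph, provided $n$, $s$, and $m$ are in the regime forced by the hypothesis. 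This yields the desired contradiction and hence the bound on $\v(G)$.

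The main obstacle, and the origin of the specific exponent $3-5\beta+\delta$, is the careful optimization of the three parameters $s$, $m$, and $\beta'$. Small $s$ produces a denser minor (favorable for invoking Theorem \ref{t:ordinaryHadwiger}) but forces $m$ to be close to $n$, while large $s$ weakens the degree lower bound in $H$; similarly, $\beta'$ must be pushed close to $1/4$ to extract maximal chromatic information, at the cost of the constant from Theorem \ref{t:ordinaryHadwiger} blowing up. A further subtlety is that Theorem \ref{t:ordinaryHadwiger} is a chromatic bound rather than a degeneracy bound, so the passage from ``dense $H$'' to ``$K_t$-minor in $H$'' cannot rely on a single low-degree vertex; instead it must route through the independence number or through a multi-scale iteration of the contraction step, and this loss per scale is precisely what produces the exponent $3-5\beta+\delta$ rather than something tighter.
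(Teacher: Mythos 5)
Your plan diverges fundamentally from the paper's proof, and as written it has a gap that prevents it from closing.

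The central missing ingredient is the \emph{asymmetric} bipartite density bound, \cref{t:logbip}, which is the new technical contribution of this section and which your argument does not identify or use. Let me explain why its absence is fatal. In your plan, you pull back the independent set $I$ of $H$ to a union of parts $A = \bigcup_{V_i\in I}V_i$ in $G$, and you want to show that the edges leaving $A$ exceed the Kostochka--Thomason bound of \cref{t:KT}. Each vertex of $A$ has degree at least $k := Ct(\log t)^\beta$ by connectivity, and each part has size at most $s$, so at least $|A|(k-s)$ edges cross from $A$ to $B := V(G)\setminus A$. Bounding these by $\e(G) \le O(\v(G)\,t\sqrt{\log t})$ from \cref{t:KT} gives $|A|(k-s) \le O(n\, t\sqrt{\log t})$, i.e.\ $|A| \le O(n(\log t)^{1/2-\beta}/C)$. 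On the other hand, $|A| \ge |I| \ge \v(H)/\chi(H) \ge \frac{n/s}{O(t(\log t)^{\beta'})}$. Combining these two facts, \emph{the factor $n$ cancels}, and what survives is an inequality among $s$, $t$, $\beta$, $\beta'$ and $C$ that is trivially satisfied; no upper bound on $n = \v(G)$ can be extracted this way. This is precisely why the paper replaces the symmetric estimate of \cref{t:KT} with the asymmetric bound $\e(G(A,B)) \le C\, t\sqrt{\log t}\,\sqrt{|A||B|} + (t-2)\v(G)$ of \cref{t:logbip}: when $|A| \ll |B|$ the $\sqrt{|A||B|}$ term is much smaller than $|A|+|B|$, and this asymmetry is exactly what produces a nontrivial bound on $\v(G)$ and the exponent $3-5\beta+\delta$.

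The paper's actual route is also structurally different from yours in another respect: rather than contracting balls to build a dense minor $H$, it takes a maximal collection of pairwise disjoint small dense subgraphs (produced via \cref{t:newforced}), bounds their number via \cref{t:minorfrompieces}, sets $X$ to be their union (this is the ``small'' side of the bipartition), and then applies the asymmetric bound to $G(X, V(G)\setminus X)$ against the connectivity lower bound on crossing edges. Your appeal to \cref{t:ordinaryHadwiger} as a black box is not obviously a shortcut here, because the place where it hurts is not chromatic information about $H$ (which you can indeed extract) but the final edge-counting step. Your remark about a ``multi-scale iteration of the contraction step'' gestures at the difficulty but does not supply the missing estimate; to complete an argument in your framework you would need to prove something like \cref{t:logbip} anyway, at which point you would be most of the way to the paper's proof.
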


Note that B\"{o}hme et al.\cite{BKMM09} proved a variant of \cref{t:connect} for graphs with connectivity linear in $t$. Namely, they show that for every $t$ there exists $N(t)$ such that every $\lceil \frac{31}{2}(t+1) \rceil$-connected graph $G$ with no $K_t$ minor satisfies $\v(G) \leq N(t)$. Their proof, however, relies on the Robertson-Seymour graph minor structure theorem and does not provide a reasonable bound for $N(t)$.    

\subsubsection*{Outline of Paper}

The proof of \cref{t:connect} reuses the main tools used to establish \cref{t:ordinaryHadwiger} in \cite{NPS19}.  \cref{t:newforced} below shows that any dense enough graph with no $K_t$ minor contains a reasonably small subgraph with essentially the same density. Meanwhile, \cref{t:minorfrompieces} guarantees that any graph with appropriately high connectivity containing many such dense subgraphs has a $K_t$ minor. These and other necessary tools are introduced in \cref{s:prelim}.

To play these two results against each other, we need a new ingredient: an extension of \cref{t:KT} to upper bound the density of asymmetric bipartite graphs with no $K_t$ minor. We prove such a bound in \cref{s:density}. In \cref{s:connect}  we use this bound to derive \cref{t:connect}.

In \cref{s:lower} we use random constructions to show that the bounds in \cref{s:density} are tight up to the constant factor and establish lower bounds on the maximum size of a graph with no $K_t$ minor and given connectivity.

In \cref{s:alon} we generalize a bound of Alon~\cite{Alon92} on choosability of complete multipartite graphs to prove a bound on choosability of a graph in terms of its number of vertices and Hall ratio. In \cref{s:final} we use this bound and \cref{t:connect} to establish \cref{t:main}. \cref{s:remarks} contains concluding remarks.

%%CHANGE
\subsubsection*{Notation}

We use largely standard graph-theoretical notation. We denote by $\v(G)$ and $\e(G)$ the number of  vertices and edges of a graph $G$, respectively, and denote by $\d(G)=\e(G)/\v(G)$ the \emph{density} of a non-null graph $G$. We use $\chi_{\ell}(G)$ to denote the list chromatic number of $G$, and $\kappa(G)$ to denote the (vertex) connectivity of $G$. We write $H \prec G$ if $G$ has an $H$ minor.   We denote by $G[X]$ the subgraph of $G$ induced by a set $X \subseteq V(G)$. For disjoint subsetes $A,B\subseteq V(G)$, we let $G(A,B)$ denote the bipartite subgraph induced by $G$ on the parts $(A,B)$. For $F \subseteq E(G)$ we denote by $G/F$ the minor of $G$ obtained by contracting the edges of $F$.

For a positive integer $n$, let $[n]$ denote the set $\{1,2,\ldots,n\}$. The logarithms in the paper are natural unless specified otherwise.

We say that vertex-disjoint subgraphs $H$ and $H'$ of a graph $G$ are \emph{adjacent} if there exists an edge of $G$ with one end in $V(H)$ and the other in $V(H')$, and $H$ and $H'$ are \emph{non-adjacent}, otherwise.

A collection $\mc{X} = \{X_1,X_2,\ldots,X_h\}$ of pairwise disjoint subsets of $V(G)$ is a \emph{model of a graph $H$ in a graph $G$} if $G[X_i]$ is connected for every $i \in [h]$, and there exists a bijection $\phi: V(H) \to [h]$, such that $G[X_{\phi(u)}]$ and $G[X_{\phi(v)}]$ are adjacent for every $uv \in E(H)$. It is well-known and not hard to see that $G$ has an $H$ minor if and only if there exists a model of $H$ in $G$.

\section{Preliminaries and Previous Results}\label{s:prelim}

For this paper, we will need two classical results on $K_t$ minor-free graphs: the first, a lower bound on their independence number; the second, an upper bound on their density.

\begin{thm}[\cite{DucMey82}]\label{t:DucMey}
	Every graph $G$ with no $K_t$ minor has an independent set of size at least $\frac{\v(G)}{2(t-1)}$. 
\end{thm}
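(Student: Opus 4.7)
The plan is to establish $\alpha(G)\ge\v(G)/(2(t-1))$ by a direct extremal argument exploiting the $K_t$-minor-free hypothesis structurally. Note that a purely degeneracy-based approach is too weak: it would yield only $\alpha(G)\gtrsim \v(G)/(t\sqrt{\log t})$, worse than the target bound for large $t$.

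First I reduce to the case $G$ connected, since both $\alpha(G)$ and $\v(G)/(2(t-1))$ are additive over connected components. Next I consider all families $\mathcal{X}=\{X_1,\ldots,X_s\}$ of pairwise disjoint, pairwise adjacent, connected subsets of $V(G)$ -- that is, all models of $K_s$ in $G$ -- and choose one maximizing $s$ first and then $\sum_i|X_i|$. The hypothesis that $G$ has no $K_t$ minor gives $s\le t-1$. A standard swap argument then forces $V(G)=\bigsqcup_iX_i$: a vertex $v\notin\bigcup_iX_i$ that is adjacent to every $X_j$ would extend $\mathcal{X}$ to a $K_{s+1}$-model by adjoining $\{v\}$; one adjacent to some $X_j$ (but not all) could be absorbed into $X_j$ while preserving both connectivity and pairwise adjacency, contradicting the second maximization; and one adjacent to no $X_j$ would contradict the connectedness of $G$.

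From this scaffold the goal is to extract an independent set of size at least $\v(G)/(2s)\ge\v(G)/(2(t-1))$. The rough idea is to select within each connected $G[X_i]$ an independent subset of size roughly $|X_i|/2$ and then combine these across parts, using the extremality of $\mathcal{X}$ to constrain cross-part adjacencies and local swaps inside each $X_i$ to resolve the conflicts that remain. This is where the main obstacle lies: a connected subgraph need not have large independence number -- e.g.\ if $G[X_i]$ is itself a clique, then $\alpha(G[X_i])=1$ rather than $|X_i|/2$ -- so the local-to-global combination is not automatic and requires a careful amortization. Making the swap/accounting argument terminate with the right constant $2(t-1)$ is the delicate heart of the proof, and it is here that the classical Duchet--Meyniel analysis extracts the main inequality by a combinatorial double-count on the extremal model.
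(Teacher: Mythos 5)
The paper does not prove this statement; it is quoted from Duchet and Meyniel~\cite{DucMey82}, so the comparison here is against the standard argument from the literature rather than an in-paper proof.

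Your scaffold is set up correctly: by maximizing first the number of parts $s$ and then $\sum_i |X_i|$, the absorption/swap argument does indeed force $V(G)=\bigsqcup_i X_i$ (a vertex adjacent to all parts would yield a $K_{s+1}$ model, hence either contradicts maximality of $s$ or produces a $K_t$ minor; a vertex adjacent to some but not all parts can be absorbed; and if some vertex lies outside $\bigcup_i X_i$, connectedness of $G$ produces one with a neighbour in $\bigcup_i X_i$, reducing to the previous case). But the proof stops there: you explicitly concede that you do not know how to extract an independent set of size $\v(G)/(2s)$ from this partition, and the proposed route --- taking roughly half of each $X_i$ --- is false, since $G[X_i]$ may be a clique. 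That is a genuine gap, and I do not see how the extremality of the covering $K_s$-model supplies the missing amortization.

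The actual mechanism is different. Duchet and Meyniel's key lemma is about connected dominating sets, not covering clique models: \emph{every connected graph $G$ has a connected dominating set $D$ with $|D|\le 2\alpha(G)-1$}. One builds $D$ and an independent set $I\subseteq D$ simultaneously: start with $D=I=\{v\}$; while $D$ is not dominating, pick a vertex $u$ at distance exactly two from $D$ together with a common neighbour $w$ of $u$ and $D$, and set $D\leftarrow D\cup\{u,w\}$, $I\leftarrow I\cup\{u\}$. Each step keeps $D$ connected, keeps $I$ independent (as $u$ was at distance two from $D\supseteq I$), and maintains $|D|=2|I|-1$, so $|D|\le 2\alpha(G)-1$. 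The theorem then follows by induction on $t$ (the reduction to connected $G$ is as you describe). Given such a $D$, the graph $G\setminus D$ has no $K_{t-1}$ minor: any $K_{t-1}$ model there, together with the connected, dominating set $D$ as an extra branch set, would be a $K_t$ model in $G$. By induction $\alpha(G)\ge\alpha(G\setminus D)\ge (\v(G)-|D|)/(2(t-2))\ge(\v(G)-2\alpha(G)+1)/(2(t-2))$, which rearranges to $2(t-1)\alpha(G)\ge\v(G)+1>\v(G)$. Note the crucial structural move is \emph{removing one connected dominating layer and dropping $t$ by one}, not partitioning $V(G)$ into a single flat $K_s$-model; the two decompositions are not equivalent, and only the former feeds cleanly into the inductive inequality.
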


\begin{thm}[\cite{Kostochka82}]\label{t:density}
 	Let $t \geq 2$ be an integer. Then every graph $G$  with $\d(G) \geq 3.2 t \sqrt{\log t}$ has a $K_t$ minor.
\end{thm}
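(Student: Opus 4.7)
The plan is to use the classical minor-minimal counterexample approach: assume a density-$d$ graph with no $K_t$ minor exists, reduce to a minor with strong local density, and then exhibit a $K_t$ minor to derive a contradiction.

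Suppose for contradiction that $G$ satisfies $\d(G) \geq d := 3.2 t\sqrt{\log t}$ but has no $K_t$ minor. By passing to a minor, I may assume $G$ is \emph{minor-minimal} subject to $\d(G) \geq d$; this minor still has no $K_t$ minor. Two structural properties then follow. First, $\delta(G) > d$: deleting a vertex $v$ yields a graph of density $(\e(G) - \deg(v))/(\v(G) - 1)$, and a short calculation shows this is $\geq d$ whenever $\deg(v) \leq d$, contradicting minor-minimality. Second, $|N(u) \cap N(v)| \geq d$ for every edge $uv \in E(G)$: contracting $uv$ yields density $(\e(G) - |N(u) \cap N(v)| - 1)/(\v(G) - 1)$, and a similar calculation shows this is $\geq d$ whenever $|N(u) \cap N(v)| \leq d - 1$. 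Hence $G$ has very large minimum degree and every edge lies in at least $d$ triangles.

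Next, I would construct a $K_t$ minor in $G$ via a probabilistic argument in the spirit of Thomason. Choose $t$ random ``seed'' vertices and grow pairwise disjoint connected subgraphs $X_1, \ldots, X_t$ around them by a randomized BFS-style exploration: the high minimum degree ensures each $X_i$ can be grown to a moderate size without running out of edges, while the common-neighbor condition ensures that distinct $X_i$ and $X_j$ acquire a connecting edge before their explorations collide. A careful choice of exploration depth, together with a union bound over the $\binom{t}{2}$ adjacency events, should make the $X_i$'s simultaneously disjoint, connected, and pairwise adjacent with positive probability as soon as $d = \Omega(t \sqrt{\log t})$, yielding the required model of $K_t$ in $G$.

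The principal obstacle is optimizing the constant to $3.2$. A crude version of the above argument yields only $\d(G) = O(t \log t)$; sharpening to $O(t \sqrt{\log t})$ requires a refined concentration argument controlling the probability that a random piece fails to be connected, essentially reproducing the core of Kostochka's original proof. Since Theorem~\ref{t:density} is used purely as a black-box input in the rest of this paper, any $O(t\sqrt{\log t})$ bound suffices; the specific value $3.2$ can be imported directly from~\cite{Kostochka82}.
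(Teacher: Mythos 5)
The paper does not prove this result: Theorem~\ref{t:density} is quoted from Kostochka~\cite{Kostochka82} and used purely as a black-box input throughout, so there is no in-paper argument to compare your proposal against.

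Evaluated on its own terms, your proposal has a genuine gap. The preprocessing is standard and essentially sound: passing to a minor-minimal minor of density at least $d$ gives minimum degree greater than $d$, and the edge-contraction calculation gives roughly $d$ common neighbours on every edge (your stated inequality is off by one, which is harmless). The problem is the construction of the $K_t$ model, which is the entire content of the theorem. You describe a randomised BFS-style growth of $t$ connected pieces and appeal to ``moderate size'' and to pieces ``acquiring a connecting edge before their explorations collide'' without any quantification, and you then concede outright that what you have sketched would at best yield $O(t\log t)$ and that reaching $O(t\sqrt{\log t})$ ``requires a refined concentration argument\ldots essentially reproducing the core of Kostochka's original proof.'' That concession is the gap: the $\sqrt{\log t}$ factor \emph{is} the theorem, and resolving the sketch to ``now carry out the hard part as in~\cite{Kostochka82}'' is a citation, not a proof. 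Nothing in the outline points to the actual mechanism that achieves the bound---neither Kostochka's density-increment / expansion argument nor Thomason's random partition into blocks of size $\Theta(\sqrt{\log t})$ with a second-moment control on non-adjacency (the latter being precisely the technique this paper adapts in Lemma~\ref{l:dense} for the bipartite setting)---so there is no way to verify that your framework could be completed rather than replaced wholesale by one of those arguments. If the intent was only to justify treating the theorem as a known input, that is legitimate, but then the setup paragraphs are superfluous and the statement should simply be cited.
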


We also need the following results from Norin, Postle and Song~\cite{NPS19}.

\begin{thm}[\cite{NPS19}]\label{t:newforced} For every $\delta > 0$ there exists $C=C_{\ref{t:newforced}}(\delta) > 0$ such that for every $D > 0$ the following holds. Let $G$ be a graph with $\d(G) \ge C$, and let $s=D/\d(G)$.  Then $G$ contains at least one of the following: \begin{description}
		\item[(i)] a minor $J$ with $\d(J) \geq D$, or
		\item[(ii)] a subgraph $H$ with $\v(H) \leq s^{1+\delta} CD$ and $\d(H) \geq s^{-\delta}\d(G)/C$.
	\end{description}  
\end{thm}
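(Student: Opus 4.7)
My approach is a BFS-based dichotomy. First I would pass to a subgraph of minimum degree $\geq \d(G)/2$ so that BFS layers can be analyzed cleanly; this is standard. For each vertex $v$ and radius $r$, let $B_r(v)$ denote the ball of radius $r$ and $L_r(v)$ the layer at distance exactly $r$. The density of $G[B_r(v)]$ is controlled by the ratio $|L_r(v)|/|B_r(v)|$, because every vertex in $B_r(v)\setminus L_r(v)$ has all its neighbors inside $B_r(v)$. Quantitatively, when this ratio is small, $\d(G[B_r(v)])$ is on the order of $\d(G)/2$.

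If there exist $v$ and $r$ such that $|B_r(v)| \leq s^{1+\delta}CD$ and the boundary ratio is small enough to force $\d(G[B_r(v)]) \geq s^{-\delta}\d(G)/C$, then $H=G[B_r(v)]$ witnesses (ii) and we are done. Otherwise, for every $v$ and every $r$ such that $|B_r(v)|$ has not yet exceeded the size cap, the boundary ratio must be large, which means each BFS layer is at least a constant factor (depending on $\delta$ and $C$) larger than the previous one. Hence $|B_r(v)|$ grows exponentially in $r$ until it saturates the cap $s^{1+\delta}CD$. This exponential growth is the structural consequence of (ii) failing.

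From this exponential growth I would build a minor of density $\geq D$ by carving a large portion of $V(G)$ into pairwise disjoint connected pieces of size roughly $sD$, each obtained by a greedy BFS peeling from a chosen center, and then contracting each piece to a single super-vertex. The resulting minor has $\sim \v(G)/(sD)$ vertices. Because most edges out of any piece go to its exterior (boundary ratios are large in Case B), a constant fraction of the original $\d(G)\v(G)/2$ edges survive as cross-piece edges, giving $\gtrsim \v(G)/s$ edges in the minor and hence density $\geq D$.

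The principal obstacle is quantitative bookkeeping. The dichotomy needs to be calibrated so that failure of (ii) forces a layer growth factor large enough for the contracted minor to actually hit density $D$, rather than merely $\d(G)$. The $s^{\pm \delta}$ and $C$ slack in the statement is precisely what absorbs cumulative constant-factor losses from the layer analysis, the peeling step, and the discarding of intra-piece edges at contraction; I expect to choose $C=C(\delta)$ large enough, and the working radius $r\approx \log(s^{1+\delta}CD)/\log(1+\text{growth})$ carefully, so that the exponents $1+\delta$ and $-\delta$ line up on the nose. Lining up these exponents through the carving and contraction, particularly to avoid an extra $\log$ factor sneaking in, is the delicate piece of the argument.
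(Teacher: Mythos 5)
This theorem is cited from~\cite{NPS19} and no proof appears in the present paper, so I will evaluate your proposal on its own terms.

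The first half of your dichotomy is sound: passing to a subgraph of large minimum degree, and observing that if some ball $B_r(v)$ of bounded size has small boundary-to-ball ratio $|L_r(v)|/|B_r(v)|$, then $G[B_r(v)]$ has density $\Omega(\d(G))$, which comfortably meets the requirement $s^{-\delta}\d(G)/C$ once $C$ is chosen appropriately (recall the theorem is trivial for $s \le 1$ by taking $J=G$, so one may assume $s > 1$). The resulting exponential ball-growth in the complementary case is also correct.

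The gap is in the minor construction, and it is not merely bookkeeping. You contract the graph into roughly $m = \v(G)/(sD)$ connected pieces and argue that because a constant fraction of the roughly $\d(G)\v(G)$ edges are cross-piece, the minor has $\gtrsim \v(G)/s$ edges. But the number of edges of the contracted minor is the number of \emph{adjacent pairs of pieces}, not the number of cross-piece edges of $G$. If cross-piece edges concentrate between few pairs (as they well might, since two pieces of size $sD$ can share up to $(sD)^2$ edges), the adjacent-pair count could be as low as $\d(G)\v(G)/(sD)^2$, which falls short of the required $D m = \v(G)/s$ precisely in the regime $D > \d(G)$ where the theorem has content. You have no distributional control forcing the cross-piece edges to spread across many pairs; this is exactly the point where expander-to-minor arguments (Krivelevich--Sudakov, K\"uhn--Osthus) invoke either randomized piece selection or a carefully engineered ``each piece dominates a large fraction of vertices'' property, neither of which is present in a greedy BFS peel. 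Separately, your piece size is miscalibrated: the theorem is nontrivial only when $\v(G) > s^{1+\delta}CD$ (otherwise $H = G$ witnesses (ii)), and with pieces of size $sD$ the minor has $m \approx s^{\delta}C$ vertices in the borderline case $\v(G)\approx s^{1+\delta}CD$; one needs $m\ge 2D+1$ just for density $D$ to be achievable, and $s^\delta C\ge 2D$ fails for large $D$ since $C$ is a constant depending only on $\delta$. The piece size must scale with $\v(G)/D$ rather than be fixed at $sD$.

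So the BFS dichotomy is a reasonable starting frame, but the step from ``no small dense ball and hence exponential growth'' to ``minor of density $D$'' needs a genuinely different mechanism — either a random contraction scheme with a second-moment/union-bound argument that pairs of pieces are adjacent, or an iterative contraction argument that directly tracks a density increment — rather than a raw count of surviving cross-piece edges.
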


%Needed unbalanced version!!

\begin{thm}[\cite{NPS19}]\label{t:minorfrompieces} For every $\beta\in [\frac 1 4, \frac 1 2]$, there exists $C=C_{\ref{t:minorfrompieces}} >1$ satisfying the following. 
	Let $G$ be a graph with $\kappa(G) \geq  Ct(\log t)^{\beta}$, and let $r \geq (\log t)^{1-2\beta}/2$ be an integer. If 
	there exist pairwise vertex disjoint subgraphs $H_1,H_2,\ldots,H_{r}$ of $G$ such that $\d(H_i) \geq Ct(\log t)^{\beta}$ for every $i \in [r]$  then $G$ has a $K_t$ minor. 
\end{thm}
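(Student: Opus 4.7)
The strategy is to argue by contradiction: assuming $\v(G) > t(\log t)^{3-5\beta+\delta}$, I will construct $r := \lceil (\log t)^{1-2\beta}/2 \rceil$ pairwise vertex-disjoint subgraphs $H_1,\dots,H_r$ of $G$, each of density at least $C_{\ref{t:minorfrompieces}}\, t(\log t)^{\beta}$, and then invoke \cref{t:minorfrompieces}. By choosing $C = C_{\ref{t:connect}}(\beta,\delta)$ sufficiently large in the statement of \cref{t:connect}, the connectivity hypothesis on $G$ implies the connectivity hypothesis of \cref{t:minorfrompieces}, so the $r$ subgraphs force a $K_t$ minor in $G$ -- contradicting the assumption.

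The subgraphs are built iteratively. Put $G_1 := G$ and, having chosen $H_1,\dots,H_{i-1}$, let $G_i := G \setminus \bigcup_{j<i} V(H_j)$. Apply \cref{t:newforced} to $G_i$ with $D := 3.2\,t\sqrt{\log t}$ and an auxiliary small parameter $\delta' > 0$ depending on $\beta,\delta$. If outcome (i) occurs, then $G_i \subseteq G$ has a minor of density $\ge D$, so \cref{t:density} yields a $K_t$ minor of $G$, contradiction; hence outcome (ii) must occur, giving $H_i \subseteq G_i$ with $\v(H_i) \le s_i^{1+\delta'} C_{\ref{t:newforced}} D$ and $\d(H_i) \ge s_i^{-\delta'}\d(G_i)/C_{\ref{t:newforced}}$, where $s_i := D/\d(G_i)$. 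Since the connectivity hypothesis on $G$ gives $\d(G_1) \ge Ct(\log t)^{\beta}/2$, one computes $s_i = O((\log t)^{1/2-\beta})$, and hence $\v(H_i) = O(t(\log t)^{1-\beta+\delta''})$ for $\delta'' = \delta'(1/2-\beta)$, while $\d(H_i)$ is within a $(\log t)^{\delta''}$ factor of $\d(G_i)$ -- provided of course that $\d(G_i)$ itself has not decayed substantially.

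Hence the crux is maintaining $\d(G_i) = \Omega(t(\log t)^{\beta})$ across all $r$ iterations. The edges lost in passing from $G_{i-1}$ to $G_i$ are exactly those incident to $V(H_{i-1})$, of which the bipartite boundary edges $E_G(V(H_{i-1}), V(G_i))$ are the potentially large contribution. To control them I invoke the asymmetric bipartite density bound for $K_t$-minor-free graphs established in \cref{s:density}, applied to $G(V(H_j), V(G_{j+1}))$ with small side of size $O(t(\log t)^{1-\beta+\delta''})$ and large side of size at most $\v(G)$. Summing the per-iteration bound over $j < r$, a short calculation should show that the total edge loss is a vanishing fraction of $\e(G) \ge \d(G)\v(G)$ precisely when $\v(G) \ge t(\log t)^{3-5\beta+\delta}$; this is exactly where the exponent $3-5\beta$ emerges. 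The main obstacle is the delicate arithmetic of choosing $\delta'$ (hence $\delta''$) and tracking all the polylogarithmic factors so that the asymmetric bipartite bound of \cref{s:density} is strong enough to keep $\d(G_i)$ from collapsing across all $r$ steps, while simultaneously ensuring each $\d(H_i)$ meets the density threshold required by \cref{t:minorfrompieces}.
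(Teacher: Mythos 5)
There is a fundamental problem: you have not proved the statement at all. The statement to be proved is \cref{t:minorfrompieces} itself --- namely, that a $Ct(\log t)^{\beta}$-connected graph containing $r \geq (\log t)^{1-2\beta}/2$ pairwise disjoint subgraphs each of density at least $Ct(\log t)^{\beta}$ must have a $K_t$ minor. Your proposal instead proves (a sketch of) \cref{t:connect}, the vertex-count bound $\v(G) \leq t(\log t)^{3-5\beta+\delta}$ for highly connected $K_t$-minor-free graphs, and it does so \emph{by invoking \cref{t:minorfrompieces} as a black box} (``and then invoke \cref{t:minorfrompieces} \ldots so the $r$ subgraphs force a $K_t$ minor in $G$''). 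As a proof of \cref{t:minorfrompieces} this is circular: the key step --- why $r$ disjoint dense pieces inside a highly connected graph yield a $K_t$ minor --- is exactly the content of the theorem, and your argument supplies no mechanism for it. Nothing in your proposal explains how the dense subgraphs $H_i$ are converted into pieces of a $K_t$ model, nor how the connectivity of $G$ is used to link them.

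For what it is worth, the argument you describe (iteratively extracting small dense subgraphs via \cref{t:newforced}, ruling out outcome (i) by \cref{t:density}, and controlling the bipartite edge loss with the asymmetric density bound of \cref{s:density}) closely tracks the paper's proof of \cref{t:connect} in \cref{s:connect} --- though the paper takes a maximal collection of disjoint dense subgraphs and handles the edge count in one step rather than iterating, which avoids your ``crux'' of tracking the decay of $\d(G_i)$ across $r$ rounds. But that is a proof of a different theorem. A genuine proof of \cref{t:minorfrompieces} (given in \cite{NPS19}) must instead show how to build a $K_t$ model: roughly, one extracts from each dense $H_i$ a large clique minor (or many disjoint connected dense fragments), and uses the $Ct(\log t)^{\beta}$-connectivity of $G$ to join the pieces coming from different $H_i$'s by disjoint paths so that the resulting $\approx t$ branch sets are pairwise adjacent. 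None of that machinery appears in your proposal, so the statement remains unproved.
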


Note that Theorem~\ref{t:minorfrompieces} was stated only for $\beta = \frac 1 4$ in~\cite{NPS19}, however the same proof works for every $\beta \in [\frac 1 4, \frac 1 2]$.

\section{Asymmetric density}\label{s:density}

In this section we use variants of arguments of Thomason~\cite{Thomason84,Thomason01} to establish an upper bound on the density of assymetric bipartite graphs with no $K_t$ minor.

\begin{lem}\label{l:dense}
Let $t$ be a positive integer, let $G$ be a graph with $n =\v(G) \geq 9t$, let $q= 1- \frac{\e(G)}{\binom{n}{2}}$ and $l = \left\lfloor \frac{n}{9t} \right\rfloor$. If \begin{equation}\label{e:dense} 6t(70q)^{l^2} \leq 1,\end{equation} then $G$ has a $K_t$ minor.
\end{lem}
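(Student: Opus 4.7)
The plan is to produce a $K_t$ model in $G$ by a probabilistic argument in the spirit of Thomason~\cite{Thomason84,Thomason01}, combining random selection inside prescribed blocks with the Lov\'asz Local Lemma. The goal is to find $t$ pairwise disjoint, pairwise adjacent, connected subgraphs $T_1,\dots,T_t$ of $G$; contracting each then exhibits a $K_t$ minor.

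Since $n \geq 9lt$, I will first fix pairwise disjoint blocks $B_1,\dots,B_t \subseteq V(G)$, each of size exactly $9l$. Within each $B_i$, I will sample an $l$-element subset $S_i$ uniformly at random, independently across $i$. The surplus of $8l$ vertices per block will be used to extend each $S_i$ to a connected branch set $T_i \subseteq B_i$.

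The technical heart of the argument is an estimate on the probability of the bad event $A_{ij}$ that $S_i$ and $S_j$ are non-adjacent in $G$. Since non-adjacency amounts to $S_i \cup S_j$ inducing a $K_{l,l}$ in the complement $\overline{G}$, the estimate reduces to bounding the number of $K_{l,l}$ subgraphs of $\overline{G}$ whose parts lie in $B_i$ and $B_j$. Starting from the global non-edge count $q\binom{n}{2}$ and iterating H\"older's inequality (or Cauchy--Schwarz) over the $l$ coordinates of each $S_i$---using the $9$-fold over-sampling built into the blocks to soak up adversarial non-edge concentrations---I plan to establish $\Pr[A_{ij}] \leq (70q)^{l^2}$.

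Each event $A_{ij}$ depends only on $S_i$ and $S_j$, so its dependency degree is at most $2(t-2)$. The symmetric Lov\'asz Local Lemma then applies, as $e \cdot (2t-3) \cdot (70q)^{l^2} \leq 6t(70q)^{l^2} \leq 1$ by hypothesis (using $2e < 6$). Hence with positive probability no $A_{ij}$ occurs and the $S_i$ are pairwise adjacent. The connected extensions $T_i \supseteq S_i$ inside $G[B_i]$ are then obtained using that $G[B_i]$ is very dense on average (expected non-edges $O(ql^2)$); any residual failure probability can be absorbed by a slight strengthening of the counting bound. Contracting each $T_i$ then yields the claimed $K_t$ minor.

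The main obstacle is establishing $\Pr[A_{ij}] \leq (70q)^{l^2}$. A naive count of $K_{l,l}$-copies in a bipartite graph with $m$ edges yields only an $m^l$-type bound, which would translate into the weaker $(Cq)^l$-bound here; the extremal configurations are large complete bipartite subgraphs of $\overline{G}$ which are invisible to pure edge-density estimates. Recovering the $l^2$-exponent requires fully exploiting the block over-sampling (sampling $l$ out of $9l$ vertices per block) together with a careful moment / convexity argument.
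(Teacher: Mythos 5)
Your proposal has a genuine gap, and it sits exactly where you flag the ``main obstacle.'' You fix disjoint blocks $B_1,\dots,B_t \subseteq V(G)$ of size $9l$ and claim $\Pr[A_{ij}]\le (70q)^{l^2}$ for random $l$-subsets $S_i\subseteq B_i$. This bound is false. Take $G$ whose complement contains a complete bipartite graph on disjoint sets $U,W$ of size $m\approx \sqrt{q}\,n$ (the non-edge count is then consistent with $q$). Since the blocks consume essentially all of $V(G)$ (because $9lt\approx n$), some block must meet $U$ and another must meet $W$; and as soon as $m\ge 9l$ — which requires only $q\gtrsim 1/t^2$, comfortably allowed by $6t(70q)^{l^2}\le 1$ — the adversary can arrange $B_i\subseteq U$ and $B_j\subseteq W$. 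Then \emph{every} choice of $S_i,S_j$ is non-adjacent and $\Pr[A_{ij}]=1$. The $9$-fold over-sampling inside each block is powerless here because the obstruction is the placement of the block itself, not the sub-selection within it; and randomizing the blocks only pushes the bound down to roughly $q^{l/2}\cdot q^{l/2}=q^{l}$ (the chance that $S_i$ lands in $U$ and $S_j$ in $W$), which is far larger than $q^{l^2}$ and not small enough for the hypothesis to close the union/LLL step. The same concentration of non-edges inside a single block also destroys your connected-extension step, since $G[B_i]$ can be made arbitrarily sparse.

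The missing idea in your plan is the regularization step the paper performs first: pass to a set $Z$ of $\lfloor n/3\rfloor$ vertices each having at most $2qn$ non-neighbours (which exists by averaging), and draw \emph{all} $l$-sets from $Z$. This per-vertex degree control is what creates the $l^2$ exponent, via an asymmetric two-stage estimate: for each $v\in Z$ and random $l$-set $X\subseteq Z$, $\Pr[v\text{ non-adjacent to }X]\le (7q)^l$; call $X$ \emph{good} if at most $3n(7q)^l$ vertices of $Z$ avoid it (probability $\ge 2/3$ by Markov), and then a random $l$-set $Y$ lies entirely among the avoiders with probability at most $(70q)^{l^2}$. No single H\"older/Cauchy--Schwarz count of $K_{l,l}$'s in $\overline{G}$ reproduces this, precisely because of the extremal bipartite configurations you yourself identify. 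Restricting to $Z$ also fixes connectivity for free: non-adjacent vertices of $Z$ share $\ge(1-4q)n$ common neighbours, so branch sets are connected greedily through vertices \emph{outside} $Z$, rather than inside an $8l$-vertex surplus. Finally, the Lov\'asz Local Lemma is not needed: the paper selects $2t$ candidate sets $X_i$ and $t$ sets $Y_j$, shows each $X_i$ is adjacent to all $Y_j$ with probability at least $1/2$, and concludes by expectation that $t$ of them succeed; your LLL bookkeeping is correct but solves the wrong (and easier) part of the problem.
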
 

\begin{proof} By definiton of $q$, $G$ contains $\binom{n}{2}q$ non-edges. Thus  there exists a set $Z$ of $\lfloor n/3 \rfloor $ vertices of $G$ such that each vertex in $Z$ has at most $2qn$ non-neighbors in $G$. 
	
Given $v \in Z$,  consider $X \subseteq Z - \{v\}$ with $|X|=l$ chosen uniformly at random. 
Then  the probability that $v$ has no neighbor in $X$ is at most $(2qn/(|Z|-1))^l \leq (7q)^l$. 

It follows that if $X \subseteq Z$ with $|X|=l$ is chosen uniformly at random, then the expected number of vertices in $Z-X$ with no neighbor in $X$ is at most $n(7q)^l$. We say that a set $X$ is \emph{good} if at most $3n(7q)^l$ vertices in $Z-X$ have no neighbor in $X$. By Markov's inequality the probability that the set $X$ as above  is good is at least $2/3$.
	
	Given a good set $X \subseteq Z$, suppose that a set $Y$ of size $l$ is selected from $Z-X$ uniformly at random.  Then the probability that no vertex of $Y$ is adjacent to a vertex of $X$ is at most $$\left( \frac{3n(7q)^{l}}{|Z|-l} \right)^l \leq  (70q)^{l^2}.$$ 
	
	We now select disjoint subsets  $X_1,X_2,\ldots,X_{2t},Y_1,Y_2,\ldots,Y_{t}$ of $Z$ such that $|X_i|,|Y_j|=l$ uniformly at random. We say that a pair $(i,j) \in [2t] \times [t]$ is \emph{unfulfilled} if there does not exist $\{u,v\} \in E(G)$ with $u \in X_i$, $v \in Y_j$. We say that $X_i$ is \emph{perfect} if  $(i,j)$ is not unfulfilled for every $j \in [t]$.
	
	By the calculations above, if $X_i$ is good then the expected number of unfulfilled pairs $(i,j)$  is at most  $10^l(7q)^{l^2}t \leq 1/6$ by (\ref{e:dense}). Therefore the probability that $X_i$ is perfect is at least $1/2$. Thus there exists a choice of sets $\{X_i\}_{i \in [2t]}, \{Y_j\}_{j \in [t]}$ as above, such that at least $t$ of the sets $X_1,X_2,\ldots,X_{2t}$ are perfect. Thus we may assume that $X_1,\ldots,X_t$ are perfect.
	
	Note that 
	every two non-adjacent vertices in $Z$ have at least $(1-4q)n \geq 2/3n$ common neighbors. In particular, every two such vertices have more than $|Z|$ common neighbors in $V(G)-Z$. Thus we can greedily construct pairwise disjoint $B_1,B_2,\ldots,B_t \subseteq V(G)$ such that $X_i \cup Y_i \subseteq B_i$, and $G[B_i]$ is connected for every $i \in [t]$ . These sets form a model of $K_t$ as desired.
\end{proof}

\begin{thm}\label{t:logbip}
	There exists $C=C_{\ref{t:logbip}}>0$ such that for every $t \geq 3$ and every bipartite graph $G$ with bipartition $(A,B)$ and no $K_t$ minor we have
	\begin{equation} \label{e:logbip}
	\e(G) \le C t\sqrt{\log t} \sqrt{|A||B|}  + (t-2)\v(G).
	\end{equation}	 
\end{thm}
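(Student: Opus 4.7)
My plan is a minimum-vertex counterexample argument reducing to \cref{l:dense}. Assume without loss of generality $|A|=a\le b=|B|$, and let $G$ be a bipartite graph with bipartition $(A,B)$, no $K_t$ minor, $\e(G)>Ct\sqrt{\log t}\sqrt{ab}+(t-2)(a+b)$, and $\v(G)$ minimum. If $a\le t-2$, every edge of $G$ meets $A$, so any $K_t$-model would need at least $t-1$ branch sets meeting $A$, which is impossible; and then $\e(G)\le ab\le (t-2)(a+b)$, contradicting our assumption. So $a\ge t-1$. From minimality (deletion of any single vertex restores the inequality) and the estimate $\sqrt x-\sqrt{x-1}\ge 1/(2\sqrt x)$, one obtains the minimum-degree bounds $d(v)>(t-2)+\tfrac{C}{2}t\sqrt{\log t}\sqrt{b/a}$ for $v\in A$ and $d(v)>(t-2)+\tfrac{C}{2}t\sqrt{\log t}\sqrt{a/b}$ for $v\in B$.

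I would then split the argument based on the ratio $b/a$. In the \emph{balanced} regime $b\le Ka$ for a large constant $K=K(C)$, we have $\d(G)=\e(G)/(a+b)\ge Ct\sqrt{\log t}\cdot\sqrt{ab}/(a+b)=Ct\sqrt{\log t}/(\sqrt{a/b}+\sqrt{b/a})\ge Ct\sqrt{\log t}/(\sqrt K+1/\sqrt K)$, which exceeds $3.2t\sqrt{\log t}$ provided $C\ge 6.4\sqrt K$; hence \cref{t:density} yields a $K_t$ minor in $G$, a contradiction. In the \emph{unbalanced} regime $b>Ka$, I would apply \cref{l:dense} to a contracted graph $G'$ built as follows. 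Pick a function $\phi:B\to A$ with $\phi(u)\in N(u)$ (which exists since every $u\in B$ has $d(u)\ge t-1\ge 1$) and contract each cloud $\{a\}\cup\phi^{-1}(a)$ to a single supernode; the clouds are connected by construction and the resulting $G'$ has $\v(G')=a$ vertices. Taking $n=a$, $q=1-\e(G')/\binom{a}{2}$ and $l=\lfloor a/(9t)\rfloor$, verifying $6t(70q)^{l^2}\le 1$ would then yield via \cref{l:dense} a $K_t$ minor in $G'$, and hence in $G$ --- the desired contradiction.

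The main obstacle is controlling the non-edge density $q$ of $G'$. A pair $\{a_1,a_2\}\subseteq A$ is non-adjacent in $G'$ exactly when no $u\in\phi^{-1}(a_1)$ lies in $N(a_2)$ and no $u\in\phi^{-1}(a_2)$ lies in $N(a_1)$. Taking $\phi(u)$ uniformly at random from $N(u)$, the probability of this event is at most $\prod_{u\in N(a_1)\cap N(a_2)}(1-1/d_G(u))\le\exp(-|N(a_1)\cap N(a_2)|/a)$ using $d_G(u)\le a$. By Cauchy--Schwarz $\sum_{u\in B}d_G(u)^2\ge \e(G)^2/b$, so the average common-neighbor count over pairs $\{a_1,a_2\}\in\binom{A}{2}$ is $\Omega(\e(G)^2/(a^2 b))=\Omega(t^2\log t/a)$. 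A careful calculation --- combining the exponential decay with the size $\binom{a}{2}$ of the pair-set and choosing $K$ large enough --- should show the expected number of non-edges of $G'$ is $o(\binom{a}{2})$, giving $q$ small enough to satisfy \eqref{e:dense} whenever $a\ge 9t$. The residual range $t-1\le a<9t$, where \cref{l:dense} does not fire directly, can be handled by a separate common-neighbor argument: the enormous $A$-degree $\Omega(t\sqrt{(\log t)\,b/a})$ together with Hall's theorem allows one to select $t$ vertices of $A$ with pairwise common-neighbor counts exceeding $\binom{t}{2}$ and greedily route $\binom{t}{2}$ internally disjoint length-$2$ paths between them, building a $K_t$-model in $G$ directly.
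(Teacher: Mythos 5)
Your degree bounds from minimality and the balanced/unbalanced split are reasonable, and the balanced regime (direct appeal to \cref{t:density}) is a valid shortcut. But the unbalanced regime, which is the heart of the matter, has a genuine gap. You contract all of $B$ into $A$ and want to show the resulting graph $G'$ on $a$ vertices has small non-edge density $q$. The probability a pair $\{a_1,a_2\}$ is non-adjacent in $G'$ is bounded by $\exp(-|N(a_1)\cap N(a_2)|/a)$, and via Cauchy--Schwarz you get that the \emph{average} of $|N(a_1)\cap N(a_2)|$ over pairs is $\Omega(t^2\log t/a)$. However, to bound $q = \bb{E}\bigl[\#\text{non-edges}\bigr]/\binom{a}{2}$ you need an upper bound on the \emph{average of $\exp(-c/a)$}, and since $x\mapsto e^{-x}$ is convex, Jensen gives $\bb{E}[e^{-c/a}] \ge e^{-\bb{E}[c]/a}$ --- the wrong direction. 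A large average common-neighbor count is perfectly consistent with, say, $99\%$ of pairs having no common neighbor at all and a few pairs carrying all the mass, in which case $q$ is close to $1$. The degree lower bounds alone do not rule this out, and the ``careful calculation'' you gesture at cannot succeed without further structural input.

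The paper sidesteps this with a step you don't use: exploit minor-closure (not just subgraph-closure) in the minimality argument. Because every vertex has high degree, Kostochka's bound (\cref{t:density}) forces some vertex $v_0$ on the larger side to have \emph{small} degree $\le 7t\sqrt{\log t}$. For any two neighbors $u_1,u_2$ of $v_0$, the minor $G'$ obtained by deleting $v_0$ and identifying $u_1,u_2$ satisfies $\e(G)-\e(G') = \deg(v_0) + d(u_1,u_2)$; comparing with the extremal inequality for $G'$ and using the smallness of $\deg(v_0)$ yields a \emph{pointwise} lower bound $d(u_1,u_2)\ge \tfrac{C}{4}\alpha t\sqrt{\log t}$ for every such pair. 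The paper then applies the random-contraction argument only to the small, locally dense set $X$ of $\approx t\sqrt{\log t}$ neighbors of $v_0$ (not to all of $A$ as you do), where the pointwise bound makes $q$ provably tiny. Your final patch for the range $t-1 \le a < 9t$ is also only a sketch, but the missing contraction step in the minimality argument is the substantive obstacle.
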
	 

\begin{proof} We show that $C = 6400 > 4(20)^2(1+\log(20))$ satisfies the lemma. Suppose for a contradiction that there exists a bipartite graph $G$ with bipartition $(A,B)$ with no $K_t$ minor such that (\ref{e:logbip}) does not hold. Choose such $G$ with $\v(G)$ minimum. Let $\alpha=\sqrt{|A|/|B|}$ and consider $v \in A$. By the choice of $G$, we have $$ \e(G \setminus v) \le C t\sqrt{\log t} \sqrt{(|A|- 1)|B|}  + (t-2)(\v(G) - 1), $$ and so
	\begin{align*} \deg(v) &= \e(G) - \e(G \setminus v) \\ &\geq C t\sqrt{\log t} (\sqrt{|A||B|} - \sqrt{(|A|-1)|B|}) + t-2 \\ &\geq \frac{C}{2} \alpha^{-1}t\sqrt{\log t} + t-2. \end{align*}
	Similarly, 	$$\deg(v)\geq \frac{C}{2}  \alpha t\sqrt{\log t} + t-2$$ for every $v \in B$.
	%In particular, $\deg(v) \geq t-1 \geq 2$ for every $v \in V(G)$.
	 Assume $|A| \geq |B|$, without loss of generality. Then there exists $v_0 \in A$ such that $\deg(v_0) \leq 7 t \sqrt{\log{t}} \leq \frac{C}{4}  \alpha t\sqrt{\log t},$ as otherwise $G$  has a $K_t$ minor by Theorem~\ref{t:density}. 
	
	Fix an arbitrary pair of neighbors $u_1,u_2 \in B$ of $v_0$ and consider the graph $G'$ obtained from $G$ by deleting $v_0$ and identifying $u_1$ and $u_2$. As $G'$ is a minor of $G$, we have that $G'$ has no $K_t$ minor, and so $$\e(G') \leq C t\sqrt{\log t} \sqrt{(|A|- 1)(|B|-1)}  + (t-2)(\v(G) - 2), $$ by the choice of $G$. Let $d(u_1,u_2)$ denote the number of common neighbors of $u_1$ and $u_2$ in $A - \{v_0\}$. As $\e(G)-\e(G') = \deg(v_0) + d(u_1,u_2)$ the bounds on $\e(G),\e(G')$ and $\deg(v_0)$ above imply  that $$ d(u_1,u_2) \geq  \frac{C}{4} \alpha  t\sqrt{\log t} =: s.$$
	
	Let $n = \lceil  \alpha^{-1}t\sqrt{\log t} + t-2 \rceil \geq t-1$, and let $X$ be a set of  $n$ arbitrary neighbors of $v_0$. For every $v \in A - v_0$ such that $v$ has a neighbor in $X$, we choose such a neighbor $u$ uniformly independently at random, and contract $v$ onto $u$. Let $H$ be the random graph induced on $X$ obtained via this procedure. The probability that any two given vertices in $X$ are non-adjacent in $H$ is at most
	$$q:=\left(1-\frac{2}{n}\right)^{s} \le e^{-2s/n}.$$
	If $\binom{n}{2}q < 1$, then with positive probability $H$ is complete, and so $G$ contains a complete minor on $n+1$ vertices, a contradiction. Thus  we assume that $\binom{n}{2}q \geq 1$, implying $2\log n \geq 2s/n$. Moreover,  $sn \geq Ct^2\log{t}/4$ by definition of $s$ and $n$. It follows that $n^2 \log{n} \geq Ct^2\log{t}/4$ implying $n \geq 20t$.\footnote{Otherwise, $n^2 \log{n} \leq (20)^2 t^2(\log t + \log 20) \leq (20)^2(\log(20)+1) t^2\log{t} < Ct^2\log{t}/4$.} As $\alpha \leq 1$ from definition of $n$ we have $n \leq 2t\sqrt{\log t}$. Combining these inequalities we have and so $2s/n \geq C/8$ and $q < 1/(70)^2$. 
	
	Let $l = \lfloor \frac{n}{9t} \rfloor$. Then \begin{equation}
	\label{e:l}
	l \geq \frac{n}{18t} \geq \frac{1}{18}\alpha^{-1}\sqrt{\log t}
	\end{equation}
	and 
	\begin{align*}
	(70q)^{l^2} &\leq q^{l^2/2} \leq \exp\left( -\frac{sl^2}{n}\right)
	\leq \exp\left(-\frac{sl}{18t}\right) \\ &\leq \exp\left(-\frac{C}{72} \log t\right)\leq \frac{1}{t^3} \leq \frac{1}{6t}.
	\end{align*}
	Thus (\ref{e:dense}) holds for $H$, and thus $H$ contains a $K_t$ minor by \cref{l:dense}, a contradiction.
\end{proof}	

Note that the graph $K_{a,t-2}$ has no $K_t$ minor for any integer $a$ showing that the term $(t-2)\v(G)$ in (\ref{e:logbip}) is necessary. In \cref{s:lower} we show that the bound in \cref{t:logbip} is tight for all values of $|A|,|B|$ up to the constant factor.

\section{Proof of \cref{t:connect}}\label{s:connect}

In this section, we prove Theorem~\ref{t:connect}, which we restate for convenience.

\Connect*
\begin{proof}[Proof of Theorem~\ref{t:connect}] We assume without loss of generality that $\delta < 1/4$.  It suffices to show that there exist  $C, t_0=t_0(\delta)$,  such that for all positive integers $t \geq t_0$,  every graph $G$ with $\kappa(G) \geq Ct(\log t)^{\beta}$ and no $K_t$ minor satisfies $$\v(G) \le t(\log t)^{3-5\beta+\delta}.$$

Let $\delta' = \delta/3$.	Let $C_1 = C_{\ref{t:newforced}}(\delta')$, and let  $C=\max\{ 4 C_{\ref{t:logbip}}, C_{\ref{t:minorfrompieces}}\}$. We choose $t_0 \gg C,C_1,1/\delta$ implicitly to satisfy the inequalities appearing throughout the proof. 

Let $k = Ct(\log t)^{\beta}$ and let $G$ be a graph with $\kappa(G) \geq k$ and no $K_t$ minor.
Choose a maximal collection $H_1,H_2,\ldots,H_{r}$ of pairwise vertex disjoint subgraphs of $G$ such that $\d(H_i) \geq Ct(\log t)^{\beta-\delta'}$ and $\v(H_i) \leq t(\log t)^{1-\beta+\delta'}$. 	Since $G$ has no $K_t$ minor, it follows from \cref{t:minorfrompieces} that $r < (\log t)^{1-2\beta+2\delta'}/2$. Let $X = \cup_{i \in [r]}V(H_i)$. Then $|X| < t (\log t)^{2-3\beta+3\delta'} = t(\log t)^{2-3\beta + \delta}$.
	
Let $G' = G\setminus X$. First suppose that $\d(G')\ge k/4$. Let $D= 3.2 t \sqrt{\log t}$. We apply \cref{t:newforced} to $\delta'$, $D$ and $G'$. If $G'$ has a minor $J$ with $\d(J) \geq D$ then $G'$ has a $K_{t}$ minor by \cref{t:density}, contradicting the choice of $G$.	Thus there exists a subgraph $H$ of $G'$ such that  $\v(H) \leq s^{1+\delta'} C_1D$ and $\d(H) \geq s^{-\delta'}\d(G')/C_1$, where $s=D/\d(G') \leq 13(\log{t})^{1/2-\beta}$.  It is easy to check that for large enough $t$ the above conditions imply $\d(H) \geq  Ct(\log t)^{\beta-\delta'}$ and $\v(H) \leq t(\log t)^{1-\beta+\delta'}$. Thus the collection  $\{H_1,H_2,\ldots,H_{r},H\}$ contradicts the maximality of $\{H_1,H_2,\ldots,H_{r}\}$.
	
	So we may assume that $\d(G') < k/4$. That is, $\e(G') < (k/4) \v(G')$. Since $\kappa(G) \geq k$, every vertex in $V(G')$ has degree at least $k$ in $G$. It follows that
	\begin{equation}
	\label{e:connect1}
	e(G(X, V(G'))) \ge \frac{k}{2} \v(G').
	\end{equation} Yet since $G$ has no $K_t$ minor, we have by \cref{t:logbip} applied to $G(X,V(G'))$ that
	\begin{equation}
	\label{e:connect2}
	e(G(X, V(G'))) \le  C_{\ref{t:logbip}} t\sqrt{\log t}\sqrt{|X|\v(G')} + t(|X|+\v(G')).
	\end{equation}
	If $\v(G') \leq |X|$ then $$\v(G) \leq 2t(\log t)^{2-3\beta + \delta} \leq t(\log t)^{3-5\beta+\delta}$$ for sufficiently large $t$, as desired.
	Thus we assume $\v(G') \geq |X|$. Combining (\ref{e:connect1}) and (\ref{e:connect2}) we have 	\begin{equation}\label{e:connect3} (k/2-2t)\v(G') \leq  C_{\ref{t:logbip}}t\sqrt{\log t}\sqrt{|X|\v(G')}.	\end{equation} 
	Assuming that $t$ is large enough, we have that $k \geq 8t$, and so $k/2-2t \geq k/4$. Thus the above implies 
	$$\v(G') \leq (4 C_{\ref{t:logbip}})^2t^2\log t \cdot\frac{|X|}{k^2} \leq \frac{|X|\log{t}}{(\log{t})^{2\beta}} \leq  t(\log t)^{3-5\beta + \delta},$$ 
	as desired.
\end{proof}

\section{Lower bounds}\label{s:lower}

In this section we prove lower bounds on the density of asymmetric bipartite graphs with no $K_t$ minor and on the size of such graphs with given connectivity.

For $0 \leq p \leq 1$ and  pair of integers $a,b >0$ we denote by ${\bf G}(a,b,p)$ a random bipartite graph with bipartition $(A,B)$ where $A$ and $B$ are disjoint sets with $|A|=a$, $|B|=b$ and the edges between $A$ and $B$ are chosen independently at random with probability $p$. The next lemma mirrors a computation first used by Bollobas, Caitlin and Erd\H{o}s~\cite{BCE80} to compute the size of the largest minor in a random graph. 

\begin{lem}\label{l:random}
	For every $\eps >0$ there exists  $t_0$, such that for all $0 < p < 1$ and integers $t \geq t_0, a,b \geq 0$ such that $ab \leq (1-\eps)\frac{1}{-2\log (1-p)} t^2\log t$, we have
	$$\Pr [K_{t} \mathrm{\;is \;a \:minor \:of\;} {\bf G}(a,b,p)  ] \leq e^{-t^{\eps}/3}.$$
\end{lem}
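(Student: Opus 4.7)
\emph{Proof plan.} Following the Bollob\'{a}s--Catlin--Erd\H{o}s template, I would use the first moment method. Let $Y$ count the ordered tuples $(V_1, \ldots, V_t)$ of pairwise disjoint nonempty subsets of $V(G) = A \sqcup B$, where $G = \mathbf{G}(a,b,p)$, such that each $G[V_i]$ is connected and an edge of $G$ exists between $V_i$ and $V_j$ for every $i<j$. Then $\Pr[K_t \prec \mathbf{G}(a,b,p)] \le \mathbb{E}[Y]$, and I would expand $\mathbb{E}[Y]$ as a sum over size vectors $(a_i,b_i)_{i\in[t]}$ with $a_i=|V_i\cap A|$, $b_i=|V_i\cap B|$. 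For a fixed size vector, the contribution factors as the product of: (i) the multinomial count of partitions with these sizes, bounded by $\prod_i (a^{a_i}/a_i!)(b^{b_i}/b_i!)$; (ii) the probability each $G[V_i]$ is connected, bounded by $\prod_i a_i^{b_i-1}b_i^{a_i-1}p^{a_i+b_i-1}$ via the expected number of spanning trees of $K_{a_i,b_i}$ (with the usual convention $0^0=1$); and (iii) the probability that every pair $V_i,V_j$ is adjacent, which by independence of the disjoint edge sets equals $\prod_{i<j}(1-q^{N_{ij}})$, where $q=1-p$ and $N_{ij}:=a_ib_j+a_jb_i$.

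Next I would control the adjacency product (iii) via Jensen's inequality. A routine second-derivative computation shows that $x\mapsto\log(1-q^x)$ is concave on $(0,\infty)$ for $q\in(0,1)$, so
\[
\prod_{i<j}(1-q^{N_{ij}})\;\le\;(1-q^{\bar N})^{\binom{t}{2}}\;\le\;\exp\!\left(-\binom{t}{2}\,q^{\bar N}\right),
\]
where $\bar N:=\binom{t}{2}^{-1}\sum_{i<j}N_{ij}=\binom{t}{2}^{-1}\bigl(a'b'-\sum_i a_ib_i\bigr)\le 2ab/(t(t-1))$ with $a':=\sum_i a_i$, $b':=\sum_i b_i$. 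The hypothesis $ab\le(1-\eps)t^2\log t/(-2\log q)$ then gives $\bar N(-\log q)\le(1-\eps/2)\log t$ for all $t\ge t_0(\eps)$, whence $q^{\bar N}\ge t^{\eps/2-1}$ and $\binom{t}{2}q^{\bar N}\ge t^{1+\eps/2}/3$. So the adjacency product is at most $\exp(-t^{1+\eps/2}/3)$, uniformly in the size vector.

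The main obstacle is executing the union bound over size vectors tightly enough that the pre-factor does not swamp the $\exp(-t^{1+\eps/2}/3)$ decay. The naive bound $(t+1)^{a+b}$ on the number of partitions is too wasteful in the asymmetric regime where $a$ and $b$ have very different orders (so that $a+b$ can far exceed $t^{1+\eps/2}/\log t$). To overcome this, I would combine the multinomial bounds from (i) with the spanning-tree factors from (ii), applying Stirling-type estimates (such as $a_i^{a_i}/a_i!\le e^{a_i}$) to reduce each per-index contribution to a convergent series in $a_i,b_i$; the crucial gain is that large branch sets are exponentially unlikely to be connected, which tames the sum even in the asymmetric regime. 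Once the resulting pre-factor is shown to be $\exp(O(t\log t))$---which is feasible since $ab$ is only polynomial in $t$---it is absorbed by the exponential decay from Jensen, yielding $\mathbb{E}[Y]\le\exp(-t^{\eps}/3)$ for all $t\ge t_0(\eps)$, as required.
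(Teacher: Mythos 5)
Your handling of the adjacency product via Jensen (concavity of $x\mapsto\log(1-q^x)$, bounding $\bar N$ by $2ab/(t(t-1))$) matches the paper's computation exactly. The gap is in the union-bound step, and it is fatal as written. After replacing the multinomial coefficient by $\prod_i \frac{a^{a_i}}{a_i!}\frac{b^{b_i}}{b_i!}$, the sum over size vectors factorizes as $S^t$ with
$S=\sum_{(a',b')\ne(0,0)}\frac{a^{a'}b^{b'}}{a'!\,b'!}(a')^{b'-1}(b')^{a'-1}p^{a'+b'-1}$.
Already the terms with $b'=1$ give $S\ge b\sum_{k\ge1}(ap)^k/k!=b(e^{ap}-1)$. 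In the regimes where the lemma is actually applied (e.g. Corollary~\ref{c:random2}, with $p=1/2$ and $a=\Theta(t^2\log t/k)$, $k=O(t\sqrt{\log t})$, so $ap=\Omega(t\sqrt{\log t})$; even the symmetric case $a=b=\Theta(t\sqrt{\log t})$ has $ap=\Theta(t\sqrt{\log t})$), this gives $S=\exp(\Omega(t\sqrt{\log t}))$ and hence $S^t=\exp(\Omega(t^2\sqrt{\log t}))$, which completely swamps the $\exp(-t^{1+\eps/2}/3)$ gain from Jensen. The stated hope that the pre-factor is $\exp(O(t\log t))$ is therefore false. The culprit is replacing the multinomial $\binom{a}{a_1,\ldots,a_t,\cdot}$ (which enforces disjointness of the branch sets) by a product of binomials (which does not); the connectivity factor cannot repair this because star-shaped branch sets are cheap to form at constant $p$.

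You also talked yourself out of the approach that does work. Your worry that $(t+1)^{a+b}$ is "too wasteful in the asymmetric regime" overlooks a simple observation that the paper makes: in a bipartite graph at most one branch set of a $K_t$-model can lie entirely within $A$ (and at most one entirely within $B$), so if $b\le t-2$ there is no $K_t$ minor at all; hence one may assume $b\ge t-1$. Combined with the hypothesis $ab=O(t^2\log t)$ this forces $a=O(t\log t)$ (when $p$ is bounded away from $0$, as in all applications), so $a+b=O(t\log t)$ and the crude count $t^{a+b}=\exp(O(t\log^2 t))$ is easily absorbed by $\exp(-\Theta(t^{1+\eps}))$. The paper therefore drops the connectivity condition altogether, extends any $K_t$-model to a full partition of $V(G)$ (adjacency is preserved under enlarging parts), and uses only the adjacency product and the naive $t^{a+b}$ count. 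Your Jensen step is right; what is missing is the $b\ge t-1$ reduction, and the replacement you propose for it does not close the gap.
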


\begin{proof}
	Let $(A,B)$ be the bipartition of $G={\bf G}(a,b)$ as in the definition, and let $(A_1,\ldots,A_t)$ and $(B_1, \ldots,B_t)$ be  partitions of $A$ and $B$, respectively. Let $a_i = |A_i|, b_i = |B_i|$ for $i \in [t]$. Let $q = 1-p$. Then the probability that $G$ does not contain an edge from $A_i \cup B_i$ to $A_j \cup B_j$ is $q^{a_ib_j+a_jb_i}$. Thus we can upper bound the probability that $\{A_i \cup B_i\}_{i \in [t]}$ is a model of $K_t$ in $G$ by
	\begin{align*}
	\prod_{\{i,j\} \subseteq [t]}&\left(1 - q^{a_ib_j+a_jb_i} \right) \leq \exp \left(-\sum_{ \{i,j\} \subseteq [t]}q^{a_ib_j+a_jb_i} \right) \\ 
	& \leq \exp\left( -\binom{t}{2} q^{(\sum_{{\{i,j\} \subseteq [t]}}(a_ib_j+a_jb_i))/\binom{t}{2}}\right)  \\ &\leq \exp\left( -\binom{t}{2} q^{ab/\binom{t}{2}}\right)\leq \exp\left( -\binom{t}{2} q^{-(1-\eps)\log t/\log q} \right) \\ &= \exp\left(-\frac{(t-1)t^{\eps}}{2}\right).
	\end{align*}
	Suppose $a \geq b$ without loss of generality. If $b \leq t-2$ then $G$ has no $K_t$ minor. Thus we may assume $b \geq t-1 $, implying $a+b \leq t\log t$ for large enough $t$. The number of partitions $(A_1,\ldots,A_t)$ and $(B_1, \ldots,B_t)$ as above can then be loosely upper bounded by $t^{a+b} \leq \exp(t\log^2{t})$.
	By the union bound we deduce that the probability that $G$ has a $K_t$ minor is at most
	$$\exp\left(t\log^2{t}-\frac{(t-1)t^{\eps}}{2}\right) \leq \exp\left(-t^{\eps}/3\right),$$
	as desired, where the last inequality holds for $t$ large enough.
\end{proof}
Let
\begin{equation*}
\lambda := \max_{x >0} \frac{1-e^{-x}}{\sqrt{x}}=0.63817\ldots.
\end{equation*}
be the constant which appears, in particular, in the optimal bound on the asymptotic density of graphs with no $K_t$ minor established by Thomason~\cite{Thomason01}.

\begin{cor}\label{c:random}
	For every $\eps >0$ there exists  $C$, such that for all integers $a,b \geq t \geq C$ such that \begin{equation}\label{e:ab}
ab \geq C t^2\log t
	\end{equation} there exists a bipartite graph $G$ with bipartition $(A,B)$ such that $|A|=a,|B|=b$, $G$ has no $K_t$ minor and  
	$$\e(G) \geq (1-\eps) \frac{\lambda}{\sqrt{2}} t\sqrt{\log t}\sqrt{|A||B|   }.$$
\end{cor}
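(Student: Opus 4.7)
The plan is to construct $G$ as a vertex-disjoint union of random bipartite subgraphs whose dimensions are tuned to the extremal setup of Lemma~\ref{l:random}. Fix $x^*>0$ attaining the maximum in the definition of $\lambda$, let $p:=1-e^{-x^*}$, and set $M:=\lfloor (1-\eps/4)\,t^2\log t/(2x^*)\rfloor$; the defining property of $x^*$ yields $p\sqrt{M}\ge (1-O(\eps))(\lambda/\sqrt{2})\,t\sqrt{\log t}$. For any integers $\alpha,\beta\ge 1$ with $\alpha\beta\le M$, Lemma~\ref{l:random} applied with $\eps/8$ in place of $\eps$ gives $\Pr[K_t\prec \mathbf{G}(\alpha,\beta,p)]\le e^{-t^{\eps/8}/3}$, while a standard Chernoff bound gives $\Pr[\e(\mathbf{G}(\alpha,\beta,p))\ge (1-\eps/8)p\alpha\beta]\ge 1-e^{-\Omega(p\alpha\beta)}$. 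Once $t$ is large enough both probabilities exceed $1/2$, so there exists a deterministic bipartite graph $H(\alpha,\beta)$ on a labelled bipartition of sizes $\alpha,\beta$ with no $K_t$ minor and at least $(1-\eps/8)p\alpha\beta$ edges.

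Assume $a\le b$ without loss of generality. In the ``balanced'' case $b\le Ma$ I will take $k:=\lceil\sqrt{ab/M}\rceil$ and $\alpha:=\lfloor a/k\rfloor$, $\beta:=\lfloor b/k\rfloor$, which satisfy $\alpha,\beta\ge 1$ and $\alpha\beta\le ab/k^2\le M$. I then pick $k$ pairwise disjoint pairs $(A_i,B_i)\subseteq A\times B$ with $|A_i|=\alpha$ and $|B_i|=\beta$, place an independently chosen copy of $H(\alpha,\beta)$ on each, and leave the remaining (at most $k$ on each side) vertices isolated. Since the pieces are vertex-disjoint, the resulting graph $G$ has no $K_t$ minor. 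The total edge count is at least $k(1-\eps/8)p\alpha\beta$; combining the bound $p\sqrt{M}\ge (1-O(\eps))(\lambda/\sqrt{2})t\sqrt{\log t}$ with a direct calculation showing that $k\sqrt{\alpha\beta}\ge (1-O(1/k))\sqrt{ab}$ yields the required $\e(G)\ge (1-\eps)(\lambda/\sqrt{2})t\sqrt{\log t}\sqrt{ab}$.

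In the ``unbalanced'' case $b>Ma$ the above recipe would force $\alpha<1$, so I will instead take $G$ to be the disjoint union of $k:=\lfloor a/(t-2)\rfloor\ge 1$ copies of the complete bipartite graph $K_{t-2,m}$ with $m:=\lfloor b/k\rfloor$, embedded on pairwise disjoint pieces of $(A,B)$. Each $K_{t-2,m}$ has no $K_t$ minor by the classical observation recorded just after Theorem~\ref{t:logbip}, so $G$ has none. The edge count $k(t-2)m\ge (t-2)b-a$ easily beats the target: $b>Ma$ gives $\sqrt{ab}<b/\sqrt{M}$, so the required bound is at most $(1-\eps)(1-e^{-x^*})b/\sqrt{1-\eps/4}\le b\ll (t-2)b$ for $t\ge 4$. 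The main obstacle will be the integrality bookkeeping in the balanced case: I need to verify that the two layers of flooring (in $k$ and in $(\alpha,\beta)$) cost only a $(1-O(1/k))$ factor in $k\sqrt{\alpha\beta}/\sqrt{ab}$, which can be absorbed by the $\eps$-slack baked into $M$ once $C=C(\eps)$ is chosen large enough to ensure $k$ is large.
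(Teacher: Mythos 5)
Your overall strategy (take $k$ disjoint copies of a random bipartite graph $\mathbf{G}(\alpha,\beta,p)$ with $p=1-e^{-x^*}$, tuned via \cref{l:random} and Chernoff) is the same as the paper's, and your treatment of the very lopsided regime via disjoint copies of $K_{t-2,m}$ is a fine variant of the paper's one-line observation that $K_{a,t-2}$ already suffices there. The gap is in your balanced case, and it is a real one.

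You claim $k\sqrt{\alpha\beta}\ge(1-O(1/k))\sqrt{ab}$, but that is not what the floors give you. Writing $\alpha=\lfloor a/k\rfloor\ge(a/k)(1-k/a)$ and $\beta=\lfloor b/k\rfloor\ge(b/k)(1-k/b)$, the actual bound is
$k\sqrt{\alpha\beta}\ge\sqrt{ab}\,\sqrt{(1-k/a)(1-k/b)}$,
so the loss is governed by $k/a$ and $k/b$, not $1/k$. With $a\le b$, your threshold $b\le Ma$ only guarantees $a/k\ge 1$, i.e.\ $k/a$ can be arbitrarily close to $1$. For a concrete failure, take $a=t$ and $b=Mt/2$ (which satisfies $a,b\ge t$, $ab\ge Ct^2\log t$, and $b\le Ma$): then $k=\lceil t/\sqrt2\rceil$, so $\alpha=\lfloor a/k\rfloor=1$ and $\beta\approx M/\sqrt2$. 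The resulting edge count is about $k\cdot p\cdot\alpha\beta\approx\tfrac12 p\,a M$, whereas (after unwinding $\lambda\sqrt{x^*}=p$ and $M\approx t^2\log t/(2x^*)$) the required lower bound is $\approx\tfrac{1}{\sqrt2}p\,aM$; the construction achieves only a $1/\sqrt2$ fraction of the target, and the $O(\eps)$ slack baked into $M$ cannot compensate for a constant-factor deficit. Making $C$ large makes $k$ large but does nothing for $\alpha$, so ``ensure $k$ is large'' does not repair this.

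The paper avoids this by drawing the balanced/unbalanced boundary much more asymmetrically. With $a\ge b$, the paper first disposes of the case $(1-\eps)\tfrac{\lambda}{\sqrt2}t\sqrt{\log t}\sqrt{ab}\le(t-2)a$ via $K_{a,t-2}$, and deduces that in the remaining case $b\log t\ge a$. Combined with $ab\ge Ct^2\log t$, this gives $b\ge\sqrt{C}\,t$ and then $b/k=\Omega(t)$, so $b'=\lfloor b/k\rfloor\ge 1/\eps'$ and the floors cost only a $(1-\eps')$ factor on each side. To fix your argument you would need to move your case boundary inward by a factor like $(\eps')^2$ (so that $\alpha\ge 1/\eps'$ in the balanced case) and then re-verify that the $K_{t-2,m}$ construction still beats the target on the enlarged unbalanced range $b>(\eps')^2Ma$ — which it does, since the target there is at most $O(b/(\eps'))=O(t\,b)$ for appropriate $\eps'$ — essentially reproducing the paper's split.
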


\begin{proof} Let $C$ be chosen implicitly to satisfy the inequalities throughout the proof, and let $\eps' =\eps/4$.  
	
	Assume $a \geq b$, without loss of generality. Note that $K_{a,t-2}$ has no $K_t$ minor, and hence the corollary holds if 
	$$(1-\eps) \frac{\lambda}{\sqrt{2}} t\sqrt{\log t}\sqrt{ab} \leq (t-2)a .$$ 
	Thus we may assume \begin{equation}\label{e:b}
	b \log t  \geq a
	\end{equation} given $C$ is large enough.
	
	Let $x$ be such that $\lambda = \frac{1-e^{-x}}{\sqrt{x}}$, and let $p = 1-e^{-x}$. Let $$k = \left\lceil \sqrt{(1-\eps') \frac{-2\log (1-p)ab}{t^2\log t} }\right\rceil.$$
	Let $a'= \lfloor a/k \rfloor,b'= \lfloor b/k \rfloor$. By (\ref{e:ab}) and (\ref{e:b}), we have $b' \geq 1/\eps'$ given that $C$ is large enough. In particular, this implies that $b' \geq (1-\eps')b/k$ and $a' \geq (1-\eps')a/k$.
	
	By the Chernoff bound
	$$\Pr [ \e ({\bf G}(a',b',p)) \leq (1-\eps')pa'b ' ] \leq e^{- (\eps')^2 pa'b'/2} \leq  e^{-p/2}.$$
	Combining this observation \cref{l:random} we deduce that for large enough $C$, there exists a bipartite graph $G'$ with no $K_t$ minor and a bipartition $(A',B')$ such that
	$|A'|=a',|B'|=b'$ and $\e(G') \geq (1-\eps')pa'b '$. 
	
	We obtain $G$ by taking $k$ vertex disjoint copies of $G'$ (and adding isolated vertices if necessary).  Then 
	\begin{align*}
	\e(G) &\geq (1-\eps')kpa'b ' \geq (1-\eps')^3p \frac{ab}{k} \\ &\geq (1-\eps')^4 p \frac{ab}{\sqrt{\frac{-2\log (1-p)ab}{t^2\log t}}} \\ & = (1 - \eps')^4\frac{1}{\sqrt 2}\frac{1-e^{-x}}{\sqrt{x}}t\sqrt{\log t}\sqrt{ab} \\ &\geq 
	 (1-\eps) \frac{\lambda}{\sqrt{2}} t\sqrt{\log t}\sqrt{|A||B|   },
	\end{align*}
	as desired.
\end{proof}

\cref{c:random} shows that the bound in \cref{t:logbip} is tight up to the constant factor. We believe that the constant in \cref{c:random} is likely asymptotically optimal.

Next we establish a lower bound on the size of graphs with given connectivity and no $K_t$ minor.
A standard easy argument shows that  with high probability  ${\bf G}(a,b,1/2)$ is $(1 - o(1))(b/2)$-connected for $a \geq b$, as long as $a$ is not too large compared to $b$, as formalised in the next lemma.

\begin{lem}\label{l:random2}
For every $0 < \eps < 1$ and all integers $a \geq b \geq 1$ such that $a(a+1) \leq \exp(\eps^2b/32)$ we have
$$\Pr \left[\kappa({\bf G}(a,b,1/2)) < (1 - \eps)\frac{b}{2}\right] \leq \exp(-\eps^2b/64)$$
\end{lem}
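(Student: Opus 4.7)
The plan is to establish $\Pr[\kappa(G) < k] \leq \exp(-\eps^2 b/64)$ for $k := (1-\eps)b/2$ via a two-step Chernoff-plus-union-bound argument. Step one handles the event $\delta(G) < k$: every vertex $v \in V(G)$ has degree stochastically dominating $\mathrm{Bin}(b, 1/2)$ (since both parts have size at least $b$), so the standard multiplicative Chernoff bound gives $\Pr[\deg(v) < k] \leq \exp(-\eps^2 b/4)$; a union bound over the at most $2a$ vertices, together with $a \leq \exp(\eps^2 b/64)$, yields at most $2 \exp(-15\eps^2 b/64)$. The hypothesis $a(a+1) \leq \exp(\eps^2 b/32)$ (combined with $a \geq b \geq 1$) forces $b = \Omega(\eps^{-2})$, so this is comfortably less than $\tfrac12 \exp(-\eps^2 b /64)$.

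Step two bounds the probability of a separator $S$ with $|S| < k$ under the assumption $\delta(G) \geq k$. The key structural observation is that every component $C$ of $G - S$ must intersect both $A$ and $B$, with $|C \cap A|, |C \cap B| \geq k - |S|$: indeed, for $v \in C \cap A$, $N(v) \subseteq (C \cup S) \cap B$, so $|C \cap B| \geq \deg(v) - |S \cap B| \geq k - |S|$. Writing $V = U \sqcup S \sqcup W$ with $U$ one component and $W$ the union of the rest, the probability that no $G$-edge runs between $U$ and $W$ is exactly $2^{-e(U,W)}$, where $e(U,W) = |U \cap A|\cdot|W \cap B| + |U \cap B|\cdot|W \cap A|$. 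I would then union-bound over the shape $(\alpha, \beta) := (|U \cap A|, |U \cap B|)$. For $\alpha, \beta \geq 1$ there are at most $\binom{a}{\alpha}\binom{b}{\beta} \leq a^{\alpha + \beta}$ choices of $U$, and $|N(U) \setminus U|$ is distributed as an independent sum $\mathrm{Bin}(a-\alpha, 1-2^{-\beta}) + \mathrm{Bin}(b-\beta, 1-2^{-\alpha})$ whose mean exceeds $k$ by $\Omega(a)$, so a Chernoff bound gives $\Pr[|N(U) \setminus U| < k] \leq \exp(-\Omega(a))$. The single-part cases $\alpha = 0$ or $\beta = 0$ with $|U| \geq 2$ are simpler, since then $|N(U)|$ is a single Binomial with mean at least $3a/4$, well above $k \leq b/2$.

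The main obstacle is the small-$(\alpha, \beta)$ range, especially $\alpha = \beta = 1$, where $ab \leq a^2$ pairs must be balanced against a tail probability that decays only as $\exp(-\Omega(a))$. The hypothesis $a(a+1) \leq \exp(\eps^2 b/32)$ is calibrated precisely so that $a^{\alpha + \beta}\exp(-\Omega(a))$, summed over all $(\alpha, \beta)$, is at most $\tfrac12 \exp(-\eps^2 b/64)$; combining with Step one yields the claim.
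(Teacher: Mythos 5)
Your Step 1 matches the paper's first step, but Step 2 has a genuine gap: the union bound over candidate sets $U$ does not close. For a shape $(\alpha,\beta)$ with $\alpha$ of order $a/2$, the number of choices of $U$ is $\binom{a}{\alpha}\binom{b}{\beta}$, which is roughly $2^{a}$, whereas your Chernoff estimate on $|N(U)\setminus U|$ decays only like $\exp(-ca)$ for a constant $c$ strictly less than $\log 2$ (indeed, Hoeffding on a sum of at most $a/2+b$ indicator variables with deviation of order $a/4$ gives something like $\exp(-a/12)$). So the product $\binom{a}{\alpha}\binom{b}{\beta}\cdot\Pr[|N(U)\setminus U|<k]$ blows up exponentially in $a$ for intermediate $\alpha$; the hypothesis $a(a+1)\le\exp(\eps^2 b/32)$ is far too weak to rescue this, since it only controls polynomial-in-$a$ factors. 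The claim ``whose mean exceeds $k$ by $\Omega(a)$'' is also not uniform in $(\alpha,\beta)$: for $\alpha$ close to $a$ the $A$-side mean $(a-\alpha)(1-2^{-\beta})$ vanishes and the slack comes only from the $B$-side, which is merely $\Omega(b)$, not $\Omega(a)$. In short, any approach that enumerates all potential separator-sides $U$ is doomed here because the minimum degree (which is $\Theta(b)$) can be as small as $\Theta(\log a)$ under the stated hypothesis.

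The paper avoids this entirely by establishing two \emph{local} high-probability events and then arguing \emph{deterministically} that they force $k$-connectivity: (i) every vertex has degree at least $k$, and (ii) every pair of vertices on the same side of the bipartition shares more than $k/2$ common neighbours. Each event requires only a union bound over $O(a)$ vertices and $O(a^2)$ pairs, which is exactly what the hypothesis $a(a+1)\le\exp(\eps^2 b/32)$ is calibrated for. The deterministic step is short: if $|X|<k$, then one side, say $A$, meets $X$ in fewer than $k/2$ vertices; by (ii) every pair of surviving $B$-vertices still shares a common neighbour in $A\setminus X$, so $B\setminus X$ lies in one component, and by (i) every surviving $A$-vertex has a neighbour in $B\setminus X$. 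You should replace your Step 2 with this common-neighbour argument.
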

\begin{proof}
Again let $(A,B)$ be the bipartition of $G={\bf G}(a,b)$ as in the definition, and  let $k = (1-\eps)b/2$.
By the Chernoff bound
$$\Pr \left [\deg(v) < k \right]  \leq \exp(-\eps^2b/8),$$
for every $v \in A$, and the probability that a pair of vertices $v_1,v_2 \in A$ share at most  $k/2$ neighbors is at most $\exp(-\eps^2b/32).$ Analogous bounds with $b$ replaced by $a$ hold for vertices in $B$. Thus with probability at least $$1 - a(a+1)\exp(-\eps^2b/32) \geq 1 - \exp(-\eps^2b/64) $$ 
 every vertex of $G$ has degree at least $k$ and every pair of vertices of $G$ on the same side of the bipartion share more than $k/2$ neighbors. 
 
 These properties are sufficient to guarantee that $\kappa(G) \geq k$ implying the lemma. Indeed, consider $X \subseteq V(G)$ with $|X| < k$ and assume first $|A \cap X|< k/2$. Then every pair of vertices of $B - X$ share a neighbor in $A - X$, and so $B-X$ lies in a single component of $G \setminus X$. As every vertex in  $A-X$ has a neighbor in $B-X$ it follows that $G \setminus X$ is connected. The case $|B \cap X|< k/2$ is completely analogous.
\end{proof}

\begin{cor}\label{c:random2}
	There exist $\eps, t_0 >0$ such that for all integers $t \geq t_0$ and every integer $k \le \eps t \sqrt{\log t}$ there exists a graph $G$ with $\kappa(G) \geq k$ and $$\v(G) \geq \eps\frac{t^2\log t}{k}.$$
\end{cor}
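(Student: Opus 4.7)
The plan is to construct $G$ explicitly, splitting into two cases based on the size of $k$ relative to $t$.

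\emph{Case 1 ($k \le t - 2$).} Take $G = K_{k, N}$ with $N = \lceil \eps t^2 \log t / k \rceil$. For $\eps \le 1/2$ the hypothesis $k \le \eps t \sqrt{\log t}$ gives $2k^2 \le Nk$ and hence $N \ge 2k$, so $\kappa(G) = \min(k,N) = k$. The complete bipartite graph $K_{a,b}$ with $a \le b$ has no $K_{a+2}$ minor: in any model of a complete minor at most one branch set can be disjoint from the smaller side (since the larger side is independent, such a branch set must be a single vertex, and two such singletons are non-adjacent). Hence $G$ has no $K_{k+2}$ minor, and in particular no $K_t$ minor since $k + 2 \le t$. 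Clearly $\v(G) \ge N \ge \eps t^2 \log t / k$.

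\emph{Case 2 ($k \ge t - 1$).} This case is non-empty only when $t - 1 \le \eps t \sqrt{\log t}$, which forces $\log t$ to be at least a constant multiple of $1/\eps^2$; in particular $k \ge t - 1$ is much larger than $\log t$. I take $G$ to be a typical realization of ${\bf G}(a, b, 1/2)$ with $b := \lceil 4k \rceil$ and $a := \lfloor c t^2 \log t / b \rfloor$, where $c$ is a small absolute constant chosen so that $ab \le (1/2) \cdot t^2 \log t / (2\log 2)$. Then \cref{l:random} (with $\eps' = 1/2$ and $p = 1/2$) yields that $G$ has no $K_t$ minor with probability at least $1 - e^{-t^{1/2}/3}$. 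For connectivity I apply \cref{l:random2} (also with $\eps' = 1/2$), which requires $a(a+1) \le \exp(b/128)$: this holds since $a = O(t \log t)$ in this regime (as $b \ge 4(t-1)$), while $\exp(b/128) \ge \exp((t-1)/32)$ is super-polynomial in $t$. Hence $\kappa(G) \ge b/4 \ge k$ with probability at least $1 - e^{-b/256}$. A union bound produces a realization $G$ with the required properties, and $\v(G) \ge a \ge \eps t^2 \log t / k$ after choosing $\eps$ small enough.

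The main obstacle is the balancing act in Case 2: \cref{l:random2} imposes $a(a+1) \le \exp(\Omega(b))$, which effectively forces $b$ to be at least a constant multiple of $\log a$ and hence $k$ to be at least a constant multiple of $\log t$. This is precisely why the random bipartite model cannot cover the small-$k$ range, motivating the deterministic complete-bipartite construction in Case 1. Choosing $\eps$ as a small absolute constant (e.g., $\eps \le 1/50$) and $t_0$ large enough for the asymptotic estimates to hold, the two cases together cover the full range $1 \le k \le \eps t \sqrt{\log t}$.
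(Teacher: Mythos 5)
Your proposal is correct and takes essentially the same two-case approach as the paper: a complete bipartite graph $K_{k,N}$ (the paper uses $K_{a,t-2}$) for $k \le t-2$, and a random bipartite graph ${\bf G}(a,b,1/2)$ with $b = \Theta(k)$ and $a = \Theta(t^2\log t/k)$ together with \cref{l:random} and \cref{l:random2} for $k \ge t-1$. The only differences are cosmetic choices of constants and your explicit verification that $K_{a,b}$ ($a\le b$) has no $K_{a+2}$ minor, which the paper takes as known.
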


\begin{proof}
If $k \leq t-2$ then $K_{a,t-2}$ satisfies the corollary for $a$ large enough. Otherwise, let $b = 3k$ and let $a = \lceil \frac{1}{6} \frac{t^2\log t}{k} \rceil$. Then by Lemmas ~\ref{l:random} and \ref{l:random2} $G = {\bf G}(a,b,1/2)$ has no $K_t$ minor and satisfies $\kappa(G) \geq k$ for large enough $t$ and small enough $\eps$. As $\v(G) \geq a \geq \frac{1}{6} \frac{t^2\log t}{k}$ the corollary follows for $\eps \leq 1/6$.
\end{proof}

It follows from \cref{c:random2} that the bound $t(\log t)^{3-5\beta +o(1)}$ in \cref{t:connect} can not be improved beyond $O(t(\log t)^{1-\beta})$.

\section{List coloring vs. Hall ratio}\label{s:alon}

Let $K_{m*r}$ denote the complete $r$-partite graph with $m$ vertices in every part. Alon~\cite{Alon92} has proved the following.

\begin{thm}\label{t:alon}
	There exists $C_{\ref{t:alon}} > 0$ such that for every $m \geq 2$ $$\chi_l(K_{m*r}) \leq C_{\ref{t:alon}}r \log(m).$$
\end{thm}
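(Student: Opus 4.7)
The approach is the classical probabilistic partitioning argument of Alon. Set $k := \lceil C r \log m \rceil$ for a universal constant $C$ to be fixed later, and let $L$ be any list assignment with $|L(v)| = k$ for every $v \in V(K_{m*r})$. The goal is to produce pairwise disjoint color sets $S_1, \ldots, S_r$ such that $L(v) \cap S_i \neq \emptyset$ for every $v$ in the part $V_i$. Given such sets, picking any $c(v) \in L(v) \cap S_i$ for each $v \in V_i$ automatically yields a proper list coloring, because colors chosen from different $S_i$ are distinct.

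To produce such sets, I would assign each color $c \in \bigcup_v L(v)$ an independent uniform label $\phi(c) \in [r]$ and set $S_i := \phi^{-1}(i)$. A vertex $v \in V_i$ is \emph{bad} if $L(v) \cap S_i = \emptyset$, an event of probability
\[
(1 - 1/r)^k \leq \exp(-k/r) \leq m^{-C}.
\]
Summing over the $rm$ vertices of $K_{m*r}$, the expected number of bad vertices is at most $rm \cdot m^{-C} = r\, m^{1-C}$, which is strictly less than $1$ for $C$ large enough. The probabilistic method then provides a labeling $\phi$ with no bad vertex, completing the argument.

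The main delicate point is that the plain union bound drops below $1$ only when $r < m^{C-1}$, so a single universal $C$ handles only the regime in which $r$ is polynomially bounded in $m$. The complementary regime is handled by a simpler observation: when $m$ is bounded above by an absolute constant, the trivial max-degree bound $\chi_\ell(K_{m*r}) \leq \Delta(K_{m*r}) + 1 = m(r - 1) + 1$ already yields an $O(r) = O(r \log m)$ estimate. The intermediate cases can be absorbed either by invoking the Lov\'asz Local Lemma in place of the union bound, or by iterating the random-partitioning step so that the small residue of bad vertices produced by the first pass can be recolored using hitherto unused colors; this only costs a constant factor in $C_{\ref{t:alon}}$. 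Together these three subcases yield a single universal $C_{\ref{t:alon}}$ valid for all $m \geq 2$ and $r \geq 1$.
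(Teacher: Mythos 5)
The paper does not prove this theorem; it is stated as a citation to Alon~\cite{Alon92}, so the comparison must be against the known argument rather than against anything in the text.

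Your one-shot random $r$-partition plus a union bound is sound as far as it goes, but, as you note, the inequality $rm(1-1/r)^k<1$ with $k=\lceil Cr\log m\rceil$ only closes when $r<m^{C-1}$, and your three fallbacks do not actually cover the complementary regime. The max-degree bound $\Delta+1=m(r-1)+1=O(r\log m)$ handles $m=O(1)$, but the union bound already fails for $m$ arbitrarily large as soon as $r\ge m^{C-1}$ (take $m\approx\sqrt r$, say). The Local Lemma does not rescue this without a new idea: the events $A_u$ and $A_v$ are dependent whenever $L(u)\cap L(v)\ne\emptyset$, and since the lists are adversarial, all $rm$ of them may share a common color, so the dependency degree can be $rm-1$ and the LLL degenerates to the union bound you already have. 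The ``iterate with fresh colors'' plan also does not close: a bad vertex $v\in V_i$ has $L(v)\subseteq\bigcup_{j\ne i}S_j$, which is precisely the pool its up to $m(r-1)$ colored neighbors drew from, so there is no guaranteed reservoir of unused colors for it; and if instead you split the palette into $T$ tranches of size $k/T$ and run $T$ independent passes, the per-pass survival factor $m^{-C/T}$ multiplies out to $m^{-C}$, reproducing the very same constraint $r<m^{C-1}$.

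The argument that actually works (Alon's) is a recursive random \emph{bi}partition of the palette rather than a one-shot $r$-partition: two-color the colors uniformly at random, use a Chernoff bound (and a union bound over the vertices in the \emph{current} sub-instance only) to find a split in which every vertex keeps roughly half its list in each class, assign one class to half the parts and the other to the rest, and recurse. After $\log_2 r$ rounds one is left with a single part, an independent set, which any nonempty list colors. The crucial point your write-up is missing is that the logarithmic loss at level $j$ is governed by the number of vertices remaining in that sub-instance, which shrinks geometrically with $j$; summing the relative losses across levels gives a total multiplicative loss of $O(1/\sqrt C)$, so the recursion closes with $k=O(r\log m)$ uniformly in $r$, with no case split between ``$r$ small'' and ``$r$ large.''
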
 

The \emph{Hall ratio} of a graph $G$ is defined to be $\max_{H\subseteq G} \left\lceil \frac{\v(H)}{\alpha(H)} \right\rceil$. We use Theorem~\ref{t:alon} to prove the following theorem relating the list chromatic number of a graph with its Hall ratio and number of vertices.

\begin{thm}\label{t:listHall} There exists $C =C_{\ref{t:listHall}}  > 0$ satisfying the following.
	Let $\rho \geq 3$, and let $G$ be a graph with the Hall ratio at most $\rho$, and let $n = \v(G)$. If $n \geq 2\rho$, then
	$$\chi_l(G) \leq C\rho\log^2\left(\frac{n}{\rho}\right).$$ 	
\end{thm}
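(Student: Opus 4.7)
The plan is to cover $V(G)$ by independent sets of controlled size and invoke Alon's bound on $\chi_l(K_{m*r})$. Concretely, I will partition $V(G)$ into $r$ independent sets each of size at most $m=\lceil n/\rho\rceil$, with $r = O(\rho \log(n/\rho))$. Padding each part with isolated vertices so that it has exactly $m$ vertices shows that $G$ is a subgraph of $K_{m*r}$; since $n\ge 2\rho$ gives $m\ge 2$, Theorem~\ref{t:alon} applies and yields
\[
\chi_l(G)\le \chi_l(K_{m*r}) \le C_{\ref{t:alon}} r \log(m) = O\left(\rho \log^2\left(\tfrac{n}{\rho}\right)\right).
\]

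\textbf{The partition.} I will build the $I_k$'s greedily. Start with $H_1 = G$, and having chosen $I_1,\ldots,I_{k-1}$, let $H_k = G\setminus (I_1\cup\cdots\cup I_{k-1})$. If $V(H_k)\neq\emptyset$, pick an independent set $I_k\subseteq V(H_k)$ with $|I_k|=\min(\alpha(H_k),m)$ (so we \emph{truncate} any larger maximum independent set down to $m$). Since every subgraph of $G$ has Hall ratio at most $\rho$, we have $\alpha(H_k)\ge \v(H_k)/\rho$, and because $m\ge n/\rho\ge \v(H_k)/\rho$, we always get $|I_k|\ge \v(H_k)/\rho$.

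\textbf{Bounding $r$.} The key estimate is $\v(H_{k+1})\le \v(H_k)(1-1/\rho)$, which iterates to $\v(H_k)\le n(1-1/\rho)^{k-1}\le n e^{-(k-1)/\rho}$. Taking $k^\ast = \lceil \rho \log(n/\rho)\rceil + 1$, this gives $\v(H_{k^\ast})\le \rho$. After $k^\ast$ rounds at most $\rho$ vertices remain; each subsequent round removes at least one vertex (since Hall ratio $\le \rho$ and positive order imply $\alpha\ge 1$), so the process terminates after at most $k^\ast+\rho$ rounds. Using $n\ge 2\rho$, which forces $\log(n/\rho)\ge \log 2$, the total count is $r = O(\rho \log(n/\rho))$. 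Combined with $\log m = O(\log(n/\rho))$, this gives the desired bound via Alon's theorem.

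\textbf{Where the care is needed.} The main subtlety is the choice of the cap $m=\lceil n/\rho\rceil$. Without truncation one could extract the full maximum independent set at each step, but then $m$ could be as large as $n$, producing an extra $\log\rho$ factor in Alon's bound. With the truncation, both factors in $r\log(m)$ are controlled by $\log(n/\rho)$, which is exactly what is needed. A minor point is that the iteration count splits naturally into an exponential-decay phase of length $O(\rho\log(n/\rho))$ and a linear-cleanup phase of length at most $\rho$; one must verify that the hypothesis $n\ge 2\rho$ makes the cleanup phase absorbable into $O(\rho\log(n/\rho))$, which is immediate from $\log(n/\rho)\ge\log 2$.
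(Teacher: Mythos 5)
Your proof is correct, and it takes a genuinely different route from the paper's. The paper proceeds by induction on $n$: it randomly splits the palette into $L_1$ and $L_2$ (with the Chernoff bound controlling $|L_1(v)|$ for every $v$), greedily extracts $O(\rho)$ disjoint independent sets of equal size $\lfloor n/(e\rho)\rfloor$ covering a $(1-1/e)$ fraction of $G$, colors this part from $L_1$ via Alon's bound, and colors the remaining $\le n/e$ vertices from $L_2$ by the induction hypothesis; the $\log^2(n/\rho)$ emerges from telescoping the recursion. You instead build, in one pass, a partition of \emph{all} of $V(G)$ into $r = O(\rho\log(n/\rho))$ independent sets of size at most $m=\lceil n/\rho\rceil$, observe $G\subseteq K_{m*r}$, and apply Alon's theorem once; the $\log^2(n/\rho)$ appears as the product $r\cdot\log m$, both factors being $O(\log(n/\rho))$. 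The crucial point you correctly identify is the truncation: capping each greedy independent set at $m$ costs nothing (since $m\ge n/\rho\ge\v(H_k)/\rho$, the $1-1/\rho$ geometric decay survives) but keeps $\log m$ small, whereas taking maximum independent sets unconstrained would let $m$ be as large as $n$ and give only $\rho\log^2 n$. Your route avoids the induction and the random palette-splitting entirely; the trade-off is that it relies on monotonicity of $\chi_l$ under subgraphs (standard, and worth stating explicitly) and needs the small arithmetic checks you flag ($m\ge 2$, $\log m\le 2\log(n/\rho)$, absorbing the $+\rho$ cleanup rounds), all of which go through under the hypotheses $\rho\ge 3$, $n\ge 2\rho$.
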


\begin{proof}  We show by induction on $n$ that $C=\max\{16 C_{\ref{t:alon}}, \frac{3e}{\log 2}\}$ satisfies the theorem. The theorem clearly holds for $n \leq 3e\rho$ for this choice of $C$, so we assume that $n \geq 3e\rho$ for the induction step. 
	
	Consider an assignment of lists $\{L(v)\}_{v \in V(G)}$ of colors of size $l \geq C\rho\log^2(\frac{n}{\rho})$ to the vertices of $G$.
	Select a subset $L_1$ of colors by choosing every color independently at random with probability $1/\log\left(\frac{n}{\rho}\right)$. Let $L_1(v)$ denote the set of colors in $L_1$ assigned to $v$.
	By the Chernoff bound, we have
	$$ \Pr\left [ |L_1(v)| \leq \frac{C}{2}\rho \log\left(\frac{n}{\rho}\right)\right] \leq \exp\left(-\frac{1}{8}C\rho\log\left(\frac{n}{\rho}\right)\right) < \frac{1}{2n},\footnote{The last inequality holds as $C \geq 8$ and $$\rho \log\left(\frac{n}{\rho}\right) \geq \rho +   \log\left(\frac{n}{\rho}\right) + 1 \geq \log{2}+\log{\rho} +  \log\left(\frac{n}{\rho}\right)  =  \log(2n)$$} $$ 
	and, similarly, $$ \Pr \left[ |L_1(v)| \geq \frac{3}{2} C\rho\log\left(\frac{n}{\rho}\right) \right]  < \frac{1}{2n}.$$
	Thus by the union bound, with positive probability none of these events happen for any vertex $v$ of $G$. So we may assume that for every vertex $v$ of $G$, we have $$ \frac{C}{2}\rho\log\left(\frac{n}{\rho}\right)  \leq |L_1(v)|   \leq \frac{3}{2}C\rho\log\left(\frac{n}{\rho}\right).$$
	
	Let $s = \left\lfloor \frac{n}{e\rho}\right\rfloor$.
	We repeatedly select disjoint independent sets $X_1,X_2,\ldots,X_k$ in $G$ of size $s$, where $k = \left\lceil \left(1 -\frac{1}{e}\right)\frac{n}{s} \right\rceil$. This is possible, as $G - \cup_{j=1}^{i}X_j$ is a subgraph of $G$ on at least $n-(k-1)s \geq n/e$ vertices for every $i \in [k-1]$, and so $G - \cup_{j=1}^{i}X_j$ contains an independent set of size at least $s$ by definition of $\rho$.  
	
	Note that $s \geq 3$ by the choice of $n$, and so $s \geq \frac{3n}{4e \rho}$. Thus $$k \leq \left\lceil \frac{4(e-1)}{3} \rho \right\rceil \leq 4 \rho.$$ 
	Let $X = \cup_{i=1}^{k} X_i$. By Theorem~\ref{t:alon} and the above bounds on $k$ and $s$, we have $$\chi_l(G[X]) \leq C_{\ref{t:alon}}k\log{s} \leq 4C_{\ref{t:alon}}\rho \left(\log\left(\frac{n}{\rho}\right) + 2 \right) \leq 8C_{\ref{t:alon}} \rho \log\left(\frac{n}{\rho}\right) $$
	and so there exists an $L_1$-coloring $\phi_1$ of $G[X]$,as $C \geq 16 C_{\ref{t:alon}}$.
	
	By the induction hypothesis, we have
	\begin{align*}
	\chi_l(G \setminus X) &\leq C\rho\log^2\left(\frac{n}{e\rho}\right) = C \rho \left(\log\left(\frac{n}{\rho}\right) - 1 \right)^2\\
	&= C\rho\log^2\left(\frac{n}{\rho}\right) - 2 C \rho \log\left(\frac{n}{\rho}\right) +  C \rho\\
	&\leq  C\rho\log^2\left(\frac{n}{\rho}\right) -   \frac{3}2{}C\rho\log\left(\frac{n}{\rho}\right),
	\end{align*}
	where the last inequality follows since $n\ge e^2\rho$. Thus $G\setminus X$ has an $L_2$-coloring $\phi_2$, where $L_2(v) = L(v)\setminus L_1(v)$ for every $v\in V(G\setminus X)$. But then $\phi_1\cup \phi_2$ is an $L$-coloring of $G$ as desired.
\end{proof}	

\begin{cor}\label{c:listsmall}
There exists $C= C_{\ref{c:listsmall}} > 0$ satisfying the following. If $G$ is a graph with no $K_t$ minor for some $t \ge 2$ and $\v(G) \geq 4t$, then
	$$\chi_l(G) \leq Ct\log^2\left(\frac{\v(G)}{2t}\right).$$ 
\end{cor}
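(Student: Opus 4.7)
The plan is to combine the two results immediately preceding the corollary. The observation is that being $K_t$-minor-free is a hereditary property, so every subgraph $H$ of $G$ inherits the bound on independence number from Theorem~\ref{t:DucMey}. This bound is exactly what is needed to control the Hall ratio uniformly.

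First, I would note that Theorem~\ref{t:DucMey}, applied to any subgraph $H\subseteq G$ (which necessarily has no $K_t$ minor), yields $\alpha(H) \geq \v(H)/(2(t-1))$, and hence
\[
\frac{\v(H)}{\alpha(H)} \leq 2(t-1) \leq 2t.
\]
Taking the maximum over subgraphs, the Hall ratio of $G$ is at most $\rho := 2t$.

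Next, I would verify the hypotheses of Theorem~\ref{t:listHall} with this choice of $\rho$. Since $t \geq 2$, we have $\rho = 2t \geq 4 \geq 3$, and the assumption $\v(G) \geq 4t$ gives $n := \v(G) \geq 2\rho$. Applying Theorem~\ref{t:listHall} therefore yields
\[
\chi_l(G) \leq C_{\ref{t:listHall}} \cdot 2t \cdot \log^2\!\left(\frac{\v(G)}{2t}\right),
\]
and setting $C_{\ref{c:listsmall}} := 2\,C_{\ref{t:listHall}}$ completes the proof.

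There is essentially no obstacle here; the corollary is a direct packaging of Theorems~\ref{t:DucMey} and \ref{t:listHall}. The only thing to be careful about is the mild edge case $t=2$, which is handled automatically by the inequality $2(t-1) \leq 2t$ ensuring that $\rho = 2t$ is a legitimate upper bound on the Hall ratio (and satisfies $\rho \geq 3$).
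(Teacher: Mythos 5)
Your proof is correct and follows exactly the paper's route: applying Theorem~\ref{t:DucMey} to bound the Hall ratio by $2t$, then invoking Theorem~\ref{t:listHall} with $\rho=2t$. The only difference is that you spell out the hypothesis checks ($\rho\ge 3$ and $n\ge 2\rho$) that the paper leaves implicit.
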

\begin{proof}
It follows from Theorem~\ref{t:DucMey} that the Hall ratio of $G$ is at most $2t$. The corollary now follows from Theorem~\ref{t:listHall} with $\rho = 2t$. 
\end{proof}

\section{Proof of \cref{t:main}}\label{s:final}

%\begin{thm}\label{t:mainlist}
%For every $\beta > 1/4$, if $\chi_{\ell}(G) \le O(t(\log t)^{\beta})$ for every graph $G$ with no $K_t$ minor and $\v(G) \le O(t (\log t)^2)$, then $\chi_{\ell}(H) \le O(t(\log t)^{\beta})$ for every graph $H$ with no $K_t$ minor.
%\end{thm}

Before we prove Theorem~\ref{t:main}, we first need the following definition and lemma. If $G$ is a graph and $X\subseteq V(G)$, then the \emph{coboundary} of $X$ in $G$ is $(\bigcup_{v\in X} N(v))\setminus X$.

\begin{lem}\label{l:contract}
Let $k\ge 1$. If $G$ is a non-empty graph with minimum degree $d\ge 6k$, then there exists a non-empty $X\subseteq V(G)$ and a matching $M$ from the coboundary $Y$ of $X$ to $X$ that saturates $Y$ such that $|Y|\le 3k$ and $G[X\cup Y]/M$ is $k$-connected. 
\end{lem}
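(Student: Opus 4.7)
The plan is to run a minimum-counterexample argument on $|X|$. Consider the collection $\mc{F}$ of triples $(X, Y, M)$ where $X \subseteq V(G)$ is non-empty, $Y = N(X) \setminus X$ satisfies $|Y| \leq 3k$, and $M$ is a matching from $Y$ into $X$ saturating $Y$ (so we temporarily drop the $k$-connectivity requirement). The collection $\mc{F}$ is non-empty since $(V(G), \emptyset, \emptyset) \in \mc{F}$, so I pick $(X, Y, M) \in \mc{F}$ minimizing $|X|$. Setting $H := G[X \cup Y]/M$, the aim is to show $H$ is $k$-connected; otherwise I will build a strictly smaller triple in $\mc{F}$, contradicting minimality.

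Suppose $H$ has a cut $S^*$ of minimum size with $|S^*| \leq k - 1$, and non-empty sides $A^*, B^*$ of $H \setminus S^*$. I will label so that $|M^{-1}(A^*)| \leq |M^{-1}(B^*)|$, which gives $|M^{-1}(A^*)| \leq (|Y| - |M^{-1}(S^*)|)/2$. The key geometric fact is that separation in $H$ lifts to $G$: there are no $G$-edges between the preimages $\tilde{A} := A^* \cup M^{-1}(A^*)$ and $\tilde{B} := B^* \cup M^{-1}(B^*)$. Setting $X_1 := A^* \subsetneq X$ gives $|X_1| < |X|$, and using that $A^*$ (being a subset of $X$) has no $G$-neighbors outside $X \cup Y$, the new coboundary satisfies $Y_1 := N_G(X_1) \setminus X_1 \subseteq M^{-1}(A^*) \cup S^* \cup M^{-1}(S^*)$. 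A short count using $|Y| \leq 3k$, $|S^*| \leq k-1$, and $|M^{-1}(S^*)| \leq k - 1$ then yields $|Y_1| \leq 3k - 3/2 < 3k$, verifying the size bound for membership in $\mc{F}$.

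The hard part, and the main obstacle, is constructing a matching $M_1 : Y_1 \to X_1$ saturating $Y_1$. The restriction of $M$ to $M^{-1}(A^*) \subseteq Y_1$ already matches that portion into $A^* = X_1$; the remaining $Y_1$-vertices lie in $S^* \cup M^{-1}(S^*)$, and I intend to extend via Hall's theorem into $A^* \setminus M(M^{-1}(A^*))$. Hall's condition will be justified from the minimality of $S^*$: if some $T \subseteq Y_1$ violated Hall, one can replace $S^*$ by (roughly) $(S^* \setminus T^H) \cup N^H$, where $T^H$ denotes the image of $T$ in $H$ and $N^H$ the image of $N_G(T) \cap A^*$, producing a cut of $H$ strictly smaller than $|S^*|$ and contradicting minimality. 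The subtlety is the case when $T$ contains a matched pair $\{s, M^{-1}(s)\}$ with $s \in S^*$ and $M^{-1}(s) \in M^{-1}(S^*)$, which collapses to a single $H$-vertex and causes cancellation in the naive size count; I expect this to be handled by choosing $T$ to be a Hall-violator of minimum cardinality, which can be shown to avoid such pairs, or by absorbing the pair into the moved portion of the cut. Once $M_1$ is in hand, $(X_1, Y_1, M_1) \in \mc{F}$ with $|X_1| < |X|$ contradicts the extremal choice of $(X, Y, M)$, completing the proof.
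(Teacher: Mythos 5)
Your minimum-counterexample setup and the size bound $|Y_1|\le 3k-3/2<3k$ both check out, but the gap you flag in the matching reconstruction is genuine and, as written, fatal. The cut-modification argument you sketch does not obviously close: a Hall violator $T$ for the target $A^*\setminus M(M^{-1}(A^*))$ bounds $|N_G(T)\cap(A^*\setminus M(M^{-1}(A^*)))|$, while the size of your proposed replacement cut $(S^*\setminus T^H)\cup N^H$ needs control of $N_G(T)\cap A^*$, and these can differ by up to $|M(M^{-1}(A^*))|$. The matched-pair subtlety is also worse than you estimate: if a minimum Hall violator $T$ contains a pair $\{s,M^{-1}(s)\}$, discarding one element gives $|N(T')\cap W|\le |T'|$, which is not a strict violation, so ``a minimum violator avoids such pairs'' does not follow. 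You are left with the hard step unresolved, and the cut/matching duality argument you envision would need substantially more care (and possibly a different $X_1$) to make rigorous.

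The paper sidesteps this entirely by minimizing over a weaker object. Take $X$ non-empty with coboundary $Y$, $|Y|\le 3k$, minimizing $|X|$ alone, with no matching in the criterion. The saturating matching then comes for free: if Hall failed via $S\subseteq Y$ with $|N(S)\cap X|<|S|$, then $X':=X\setminus N(S)$ is non-empty (if $N(S)\cap X=X$ then $|X|<3k$ and every vertex of $X$ has degree at most $|X|+|Y|-1<6k\le d$, a contradiction) with coboundary contained in $(N(S)\cap X)\cup(Y\setminus S)$ of size strictly less than $|Y|\le 3k$, contradicting minimality. Crucially, once a small cut $(A',B')$ of $H=G[X\cup Y]/M$ is lifted to $G$, the paper only needs to exhibit a strictly smaller non-empty set with coboundary at most $3k$ --- no new matching is required, because minimality was taken over a quantity that does not mention $M$. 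It does so symmetrically: either $|A\cap(Y\cup B)|\le 3k$, in which case $(A\cap X)\setminus B$ is a smaller set with coboundary inside $A\cap(Y\cup B)$, or $|A\cap Y|>2k$; likewise for $B$; and $|A\cap Y|+|B\cap Y|>4k$ then contradicts $|A\cap Y|+|B\cap Y|\le|Y|+|A\cap B\cap Y|\le 4k-1$. By building $M$ into your minimization, you forced yourself to rebuild the matching after shrinking $X$, which is exactly the step that is hard and that you did not complete.
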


\begin{proof}
Suppose not. Let $X$ be a non-empty subset of $V(G)$ such that the coboundary $Y$ of $X$ has size at most $3k$ and subject to that $|X|$ is minimized. Such an $X$ exists as $V(G)$ has empty coboundary. 

We claim that there exists a matching $M$ from $Y$ to $X$ that saturates $Y$. Suppose not. By Hall's theorem, there exists $S\subseteq Y$, such that $|N(S)\cap X| < |S|$. If $N(S)\cap X = X$, then $|X| < 3k$ and hence every vertex in $X$ has degree at most $|X|+|Y|-1 < 6k \leq d$, a contradiction. So we may assume that $X' = X \setminus (N(S)\cap X)\ne \emptyset$. But then $X'$ has a coboundary of size at most $|N(S)\cap X| + |Y\setminus S| < |Y| \le 3k$ and $|X'| < |X|$, contradicting the minimality of $|X|$. This proves the claim.

Let $G' = G[X\cup Y]/M$. If $G'$ is $k$-connected, then the desired outcome of the lemma holds, contradicting that $G$ is a counterexample. So we may assume that $G'$ is not $k$-connected. More formally, that is, there exist $A',B'\subseteq V(G')$ such that $A'\setminus B', B'\setminus A' \ne \emptyset$, $A'\cup B' = V(G')$, $|A'\cap B'| \leq k-1$, and $\e(G'(A'\setminus B', B'\setminus A'))=0$. Let $A$ be the subset of $V(G)$ corresponding to $A'$ in $G'$, and similarly let $B$ be the subset of $V(G)$ corresponding to $B'$ in $G'$. 

Since $A'\setminus B'\ne \emptyset$ and $M$ saturates $Y$, it follows that $(A\cap X)\setminus B \ne\emptyset$. Similarly, $(B\cap X)\setminus A \ne \emptyset$.

Now $|A\cap B \cap Y| \leq |A'\cap B'| \leq k-1$. Hence $|A\cap Y| + |B\cap Y| \le |Y| + |A\cap B\cap Y| \le 4k-1$. Moreover, $|A\cap B\cap X|\leq k-1$. 

Yet if $|A\cap (Y\cup B)| \le 3k$, then $A\setminus (Y\cup B) = (A\cap X)\setminus B$ contradicts the minimality of $X$. So we may assume that $|A\cap (Y\cup B)| > 3k$. But then $|A\cap Y| > 2k$. Similarly, $|B\cap Y| > 2k$ as otherwise $B\setminus (Y\cup A) = (B\cap X)\setminus A$ contradicts the minimality of $X$. But now $|A\cap Y| + |B\cap Y| > 4k$, a contradiction.
\end{proof}

This yields the following structural corollary for $K_t$-minor-free graphs.

\begin{cor}\label{c:connect}
For every $\delta > 0$ and $1/2 \geq  \beta > 1/4$, there exists $C=C_{\ref{c:connect}}(\beta,\delta)>0$ such that for every $t \geq 3$ if a non-empty graph $G$ has no $K_t$ minor, then there exists a non-empty subset $X$ of $V(G)$ such that coboundary of $X$ has size at most $Ct(\log t)^{\beta}$ and $|X| \le t(\log t)^{3-5\beta + \delta}$.
\end{cor}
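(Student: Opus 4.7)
The plan is to combine Lemma~\ref{l:contract} with Theorem~\ref{t:connect}. Set $C_1 := C_{\ref{t:connect}}(\beta,\delta)$, $k := \lceil C_1 t(\log t)^{\beta}\rceil$, and choose $C$ large enough (depending on $\beta,\delta$) so that $6k \le Ct(\log t)^{\beta}$ for every $t \ge 3$. The two inputs we need are a ``structural'' one (Lemma~\ref{l:contract}) that converts a dense enough graph into a $k$-connected minor on a controlled vertex set, and a ``size'' one (Theorem~\ref{t:connect}) that bounds the number of vertices of any such highly connected $K_t$-minor-free graph.

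First I would dispose of the low-degree case. If $G$ contains a vertex $v$ with $\deg(v) < 6k$, I take $X := \{v\}$: the coboundary equals $N(v)$, which has size less than $6k \le Ct(\log t)^{\beta}$, and $|X|=1 \le t(\log t)^{3-5\beta+\delta}$ (the latter being trivial since $3-5\beta+\delta > 0$ and $t \ge 3$). This disposes of the case where $G$ is not dense enough to run Lemma~\ref{l:contract}.

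Otherwise $G$ has minimum degree at least $6k$, and Lemma~\ref{l:contract} applies: it produces a non-empty $X\subseteq V(G)$ whose coboundary $Y$ satisfies $|Y| \le 3k$, together with a matching $M$ from $Y$ into $X$ saturating $Y$, such that $G' := G[X\cup Y]/M$ is $k$-connected. Since $G'$ is a minor of $G$, it has no $K_t$ minor, so Theorem~\ref{t:connect} yields $\v(G') \le t(\log t)^{3-5\beta+\delta}$. Because $M$ is a matching into $X$ that saturates $Y$, contracting $M$ drops the vertex count from $|X|+|Y|$ to exactly $|X|$, so $|X| = \v(G') \le t(\log t)^{3-5\beta+\delta}$, while the coboundary of $X$ has size at most $3k \le Ct(\log t)^{\beta}$, as required.

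The hard work has already been done in Lemma~\ref{l:contract} and Theorem~\ref{t:connect}; no new combinatorial ingredient is needed here. The only nuisance is bookkeeping the constant $C$ so as to absorb the ceiling in the definition of $k$ and any finite range of small $t$ where the asymptotic constants are slack --- this is handled comfortably by taking $C$ to be a suitable multiple of $C_1$. I expect no essential obstacle; the proof is essentially a one-step reduction.
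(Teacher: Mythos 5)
Your proof is correct and follows essentially the same route as the paper's: handle the low-degree case by taking $X$ to be a single vertex, then apply Lemma~\ref{l:contract} and feed the resulting $k$-connected minor into Theorem~\ref{t:connect}. The only differences are cosmetic bookkeeping of the constant, and you helpfully spell out two small points the paper leaves implicit ($G'$ is $K_t$-minor-free because it is a minor of $G$, and $\v(G')=|X|$ because contracting the $Y$-saturating matching removes exactly $|Y|$ vertices).
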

\begin{proof} We show that $C=6(C_{\ref{t:connect}}(\beta,\delta)+1)$  satisfies the corollary.
Let $d=Ct(\log t)^{\beta}$. If there exists a vertex $v\in V(G)$ with degree at most $d$ in $G$, then $X=\{v\}$ is as desired. So we may assume that $G$ has minimum degree at least $d$. Let $k = \lfloor d/6 \rfloor \geq C_{\ref{t:connect}}(\beta,\delta) t(\log t)^{\beta}$. By Lemma~\ref{l:contract}, there exists a non-empty $X\subseteq V(G)$ and a matching $M$ from the coboundary $Y$ of $X$ to $X$ that saturates $Y$ such that $|Y|\le 3k < d$ and $G'=G[X\cup Y]/M$ is $k$-connected. By Theorem~\ref{t:connect}, $\v(G') \le t (\log t)^{3-5\beta+\delta}$. Hence $|X| \le t (\log t)^{3-5\beta+\delta}$ as desired.
\end{proof}

We are now ready to prove Theorem~\ref{t:main}, which we restate for convenience. \Main*

\begin{proof}
Let $C=C_{\ref{c:connect}}(\beta,1)$. We show that for $t \gg C$, every graph $G$ with no $K_t$ minor is $2\lceil Ct(\log t)^{\beta} \rceil$-list-colorable, which implies the theorem.
Let $d = \lceil Ct(\log t)^{\beta} \rceil$. Suppose for a contradiction that there exists $G$ with no $K_t$ minor and a $2d$-list assignment $L$  such that $G$ is not $L$-colorable, and choose such a graph $G$ with $\v(G)$ minimum. By the choice of $C$, there exists a non-empty $X\subseteq V(G)$ such that coboundary $Y$ of $X$ has size at most $d$ and $|X| \le Ct(\log t)^{4-5\beta}$. By minimality, there exists an $L$-coloring $\phi$ of $G-X$. For each $v\in X$, let $L'(v) = L(v)\setminus \{\phi(w): w\in N(v)\setminus X\}$. Since $N(v)\setminus X\subseteq Y$ for each $v\in X$ by definition of coboundary, we have that $|N(v)\setminus X| \le |Y|\le d$. Hence for each $v\in X$, $|L'(v)|\ge |L(v)|- |Y| \ge d$. By Corollary~\ref{c:listsmall}, we have that $\chi_{\ell}(G[X]) \le C_{\ref{c:listsmall}}t \log^2(C(\log t)^{4-5\beta}) \le d$ for large enough $t$. Hence $G[X]$ has an $L'$-coloring $\phi'$. But now $\phi\cup \phi'$ is an $L$-coloring of $G$, a contradiction.
\end{proof}

\section{Further Improvements}\label{s:remarks}

%\subsubsection*{Further improvements.}

The central obstacle in improving the bound on the chromatic number (and the list chromatic number) of  graphs with no $K_t$ minors using our methods is the absence of the analogue of \cref{t:connect} for graphs of connectivity $o(t (\log t)^{1/4})$. To determine the limits of this strategy it would be interesting to answer the following question.

\begin{que}
	For which $\beta >0 $, does there exist $C>0$ such that for every integer $t \geq 3$, every graph $G$ with $\kappa(G) = \Omega(t(\log t)^\beta)$ and no $K_t$ minor satisfies $\v(G) \leq t (\log t)^C$?\footnote{The last condition can be replaced by weaker, but less transparent inequality  $$\v(G) \leq t e^{o(\log t)}.$$}
\end{que} 

\subsubsection*{Acknowledgement.} We thank Zi-Xia Song for valuable comments.

%\subsubsection*{Correspondence coloring.}
%Correspondence Coloring ... Get $\beta = 1/3$ for triangle-free, however counterconjecture for general graphs still open. Suffices to prove for small graphs... Is analog of \cref{t:listHall} true for correspondence?
 
\bibliographystyle{alpha}
\bibliography{../snorin}

\end{document}